\title{A parallel-in-time collocation method using diagonalization: theory and implementation for linear problems.\thanks{Submitted to the editors on \today.}}
\author{Gayatri \v{C}aklovi\'{c}
\footnotemark[2] \footnotemark[3]
\and Robert Speck\thanks{g.caklovic@fz-juelich.de, r.speck@fz-juelich.de, Jülich Supercomputing Centre, Forschungszentrum Jülich, 52425 Jülich, Germany.}
\and Martin Frank\thanks{gayatri.caklovic@kit.edu, martin.frank@kit.edu, Department of Mathematics, Karlsruhe Institute of Technology, 76049 Karlsruhe, Germany}}
\begin{document}

\maketitle
\begin{abstract}
We present and analyze a parallel implementation of a parallel-in-time collocation method based on $\alpha$-circulant preconditioned Richardson iterations.
While many papers explore this family of single-level, time-parallel ``all-at-once'' integrators from various perspectives, performance results of actual parallel runs are still scarce.
This leaves a critical gap, because the efficiency and applicability of any parallel method heavily rely on the actual parallel performance, with only limited guidance from theoretical considerations.
Further, challenges like selecting good parameters, finding suitable communication strategies, and performing a fair comparison to sequential time-stepping methods can be easily missed.
In this paper, we first extend the original idea of these fixed point iterative approaches based on $\alpha$-circulant preconditioners to high-order collocation methods, adding yet another level of parallelization in time "across the method".
We derive an adaptive strategy to select a new $\alpha$-circulant preconditioner
for each iteration during runtime for balancing convergence rates, round-off errors, and inexactness of inner system solves for the individual time-steps.
After addressing these more theoretical challenges, we present an open-source space- and time-parallel implementation and evaluate its performance for two different test problems.
\end{abstract}

\smallskip
\noindent \textbf{Keywords.} parallel-in-time integration, iterative methods, diagonalization, collocation, high-performance computing, petsc4py

\smallskip
\noindent \textbf{AMS subject classification.} 65G50, 65M22, 65F10, 65Y05, 65Y20, 65M70

\newcommand{\R}{\mathbb{R}}
\newcommand{\C}{\mathbb{C}}
\newcommand{\N}{\mathbb{N}}
\newcommand{\vect}[1]{\mathbf{#1}}
\newcommand{\tvect}[1]{\vec{\vect{#1}}}
\newcommand{\alert}[1]{{\color{red} #1}}

\newcommand{\coll}{{\operatorname{coll}}}
\newcommand{\tA}{\hat{\vect{A}}}
\newcommand{\tB}{\hat{\vect{B}}}
\newcommand{\tC}{\hat{\vect{C}}}
\renewcommand{\u}{\underline{u}}
\newcommand{\sm}{\text{-}}
\newcommand{\nproc}{n_{\operatorname{proc}}}
\newcommand{\nstep}{n_{\operatorname{step}}}
\newcommand{\ncoll}{n_{\operatorname{coll}}}
\newcommand{\nspace}{n_{\operatorname{space}}}
\newcommand{\tsys}{T_{\operatorname{sys}}}
\newcommand{\tcomm}{T_{\operatorname{comm}}}
\newcommand{\tseq}{T_{\operatorname{seq}}}
\newcommand{\tpar}{T_{\operatorname{par}}}
\newcommand{\diag}{\operatorname{diag}}

\newtheorem{theorem}{Theorem}
\newtheorem{corollary}{Corollary}
\newtheorem{remark}{Remark}
\newtheorem{lemma}{Lemma}
\newtheorem{example}{Example}
\newtheorem{proposition}{Proposition}
\newtheorem{definition}{Definition}
\newenvironment{proof}{{\sc Proof:}}{~\hfill $\square$\\}

\renewcommand\algorithmiccomment[1]{\hfill\#\ {#1}}

\section{Introduction}
Many flavors of numerical methods for computing the solution to an initial value problem exist.
Usually, this is achieved in a sequential way, in the sense that an approximation of the solution at the time-step comes from propagating an approximation at an earlier point in time with some numerical method that depends on the time-step size. 
Nowadays, parallelization of these algorithms in the temporal domain is gaining more and more attention due to the growing number of available resources, parallel hardware, and, recently, more and more mathematical approaches. 
First theories and proposals of parallelizing this process emerged in the 60s~\cite{firstpint} and got significant attention after the \textit{Parareal} paper was published in 2001~\cite{parareal}. 
As one of the key ideas, \textit{Parareal} introduced coarsening in time to reduce the impact of the sequential nature of time-stepping.

Parallel-in-Time (PinT) methods have been shown to be useful for quite a range of applications and we refer to the recent overview in~\cite{OngEtAl2020} for more details. 
The research area is very active~\cite{pint_ref}, with many covered in the seminal work '50 Years of Time Parallel Time Integration'~\cite{50y}. 
However, while the theoretical analysis is rather advanced, only a few parallel implementations on modern high-performance computing systems exist. 
Notable exceptions are the implementations of multigrid reduction in time~\cite{MGRIT,xbraid-package,HahneEtAl2020}, the parallel full approximation scheme in space and time~\cite{EmmettMinion2012,pfasst_code,Speck2019}, and the revisionist integral deferred correction method~\cite{RIDC}.

More often than not, expected and unexpected pitfalls occur when numerical methods are finally moved from a theoretical concept to actual implementation, let alone on parallel machines.
Sole convergence analysis of a method is often not enough to prove usefulness in terms of parallel scaling or reduced time-to-solution~\cite{12ways}. 
For example, the mathematical theory has shown that the Parareal algorithm (as many other parallel-in-time methods) is either unstable or inefficient when applied to hyperbolic problems~\cite{pararealeigenvalues, pararealadvection, pararealfluids}, therefore new ways to enhance it were developed~\cite{pararealenhanced1, pararealenhanced2, pararealenhanced3}. 
Yet, most of these new approaches have more overhead by design, leading to less efficiency and applicability. 
As an example, there is a phase-shift error arising from mismatches between the phase speed of coarse and fine propagator when solving hyperbolic problems~\cite{rup}.

As an alternative, there exist methods such as paraexp~\cite{paraexp} and REXI~\cite{rexi} which do not require coarsening in time.
Another parallel-in-time approach, which can be used without the need to coarsen, are iterative methods based on $\alpha$-circulant preconditioners for ``all-at-once'' systems of partial differential equations (PDEs). 
This particular preconditioner is diagonalizable, resulting in decoupled systems across the time-steps, and early results indicate that these methods will also work for hyperbolic problems.
The $\alpha$-circulant preconditioner can be used to accelerate GMRES or MINRES solvers~\cite{mcdonlad2016, mcdonald2018, epsilonpeople} while other variants can be interpreted as special versions of Parareal \cite{paradiag} or preconditioned Richardson iterations~\cite{slw2021}.
In general, parallel-in-time solvers relying on the diagonalizable $\alpha$-circulant preconditioner can be categorized as ``ParaDiag'' algorithms \cite{pint_web, para_DIAG, para_DIAG_simax}.
While many of these approaches show very promising results from a theoretical point of view, their parallel implementations are often not demonstrated and can be challenging to implement efficiently in parallel.
Selecting parameters, finding efficient communication strategies, and performing fair comparisons to sequential methods~\cite{12ways} are only three of many challenges in this regard.  

The aim of this paper is to close this important gap for the class of $\alpha$-circulant preconditioners, which is gaining more and more attention in the field.
Using the preconditioned Richardson iterations with an $\alpha$-circulant preconditioner as presented in~\cite{slw2021}, we describe the path from theory to implementation for these methods in the case of linear problems.
We extend the original approach by using a collocation method as our base integrator and show that this choice not only yields time integrators of arbitrary order but also adds another level (and opportunity) of parallelization in time.
Following the taxonomy in~\cite{Burrage1997}, this yields a doubly-time parallel method: combining parallelization across the steps and parallelization across the method first proposed in~\cite{SchoebelEtAl2020}.
We present and analyze an adaptive strategy to select a new parameter $\alpha$ for each iteration in order to balance (1) the convergence rate of the method with (2) round-off errors arising from the diagonalization of the preconditioner and (3) inner system solves of the decoupled time-steps. 
After some error analysis, we present an algorithm for the practical selection of the adaptive preconditioners and demonstrate the actual speedup gains these methods can provide over sequential methods. 
These experiments provide some insight into more complex applications and variants using the diagonalization technique, narrowing the gap between real-life applications and theory.
The code is freely available on GitHub~\cite{paralphacode}, along with a short tutorial and all necessary scripts to reproduce the results presented in this manuscript.
\section{The method}
In this section, we briefly explain the derivation of our base numerical propagator: the collocation method. 
After that, the formulation of the so-called ``composite collocation problem'' for multiple time-steps is described.
Preconditioned Richardson iterations are then used to solve the composite collocation problem parallel-in-time.

In this paper, we focus on linear initial value problems on some interval $[0, T]$, generally written as
\begin{align}\label{eq:genericlinearform}
	u_t &= \vect{A}u + b(t), \quad u(0) = u_0,
\end{align}
where $\vect{A} \in \C^{N \times N}$ is a constant matrix, $b:\R \rightarrow \C^N$ and $u_0 \in \C^N$, $N\in\mathbb{N}$. 
Let $0 = T_0 < T_1 < \dots < T_L = T$ be an equidistant subdivision of the time interval $[0, T]$ with a constant step size $\Delta T$ .
\subsection{The composite collocation problem}
Let $0 < t_1 < t_2 < \dots < t_M \leq \Delta T$ be a subdivision of $[0, \Delta T]$.
Without loss of generality, we can assume $\Delta T = 1$. 
Under the assumption that $u \in \C^1([0, 1])$, an integral formulation of~\eqref{eq:genericlinearform} in these nodes is
\begin{align}\label{eq:integral}
	u(t_m) &= u(0) + \int_0^{t_m}\big(\vect{A}u(s) + b(s)\big)ds, \quad m = 1, \dots, M.
\end{align}
Approximating the integrand as a polynomial in these nodes, this yields
\begin{align}\label{eq:integralapproximation}
	\int_0^{t_m}\big(\vect{A}u(s) + b(s)\big)ds \approx \sum_{i=1}^M \bigg(\int_0^{t_m}c_i(s)ds \bigg)\big(\vect{A}u(t_i) + b(t_i)\big),
\end{align}
where $c_i$ denotes the $i$th Lagrange polynomial, for $i=1, \dots, M$. 
Because of the quadrature approximation~\eqref{eq:integralapproximation}, the integral equation~\eqref{eq:integral} can be rewritten in matrix formulation, generally known as the \textit{collocation problem}:
\begin{align}\label{eq:collocation}
	(\vect{I}_{MN} - \vect{Q} \otimes \vect{A})\vect{u} &= \vect{u}_0 + (\vect{Q} \otimes \vect{I}_N) \vect{b},
\end{align}
where $\vect{Q} \in \R^{M \times M}$ is a matrix with entries $q_{mi} = \int_0^{t_m}c_i(s)ds$, $\vect{u} = (u_1, \dots, u_M)\in\mathbb{C}^{MN}$ is the approximation of the solution $u$ at the collocation nodes $t_m$, $\vect{u}_0 = (u_0, \dots, u_0)\in\mathbb{C}^{MN}$, and $\vect{b} = (b_1, \dots, b_M)\in\mathbb{C}^{MN}$ is a vector of the function $b$ evaluated at the nodes.
If one wants to solve the initial value problem on an interval of length $\Delta T$, the matrix $\vect{Q}$ in~\eqref{eq:collocation} is replaced with $\Delta T \vect{Q}$, which is verified by the corresponding change of variables in the integrals stored in $q_{mi}$. 

The collocation problem has been well-studied~\cite{numODE}. 
Throughout this work, we will use Gauss-Radau quadrature nodes, with the right endpoint included as a collocation node.
This gives us a high-order method with errors behaving as $\|u(t_M) - u_M\| = O(\delta t^{2M-1})$, where $\delta t = \operatorname{max}_m(t_{m+1} - t_m)$~\cite{volterra}. 

Now, let us define $\vect{C}_\coll := \vect{I}_{MN} - \Delta T \vect{Q} \otimes \vect{A} \in \C^{MN \times MN}$ and let $\vect{H} := \vect{H}_M \otimes \vect{I}_N$, where 
\begin{equation}\label{eq:Hm}
    \vect{H}_M = 
    \begin{bmatrix}
        0 & \dots & 1 \\
        \vdots & & \vdots \\
        0 & \dots & 1
    \end{bmatrix}
    \in \R^{M\times M}.
\end{equation}
A classical (sequential) approach to solve equation~\eqref{eq:genericlinearform} with the collocation method for L time steps is first solving
\begin{equation*}
    \vect{C}_\coll \vect{u}_1 = \vect{u}_0 + \vect{v}_1,
\end{equation*}
and then sequentially solving
\begin{equation}\label{eq:seq_collocation_problem}
    \vect{C}_\coll \vect{u}_\ell =\vect{H}\vect{u}_{\ell - 1} + \vect{v}_{\ell - 1},
\end{equation}
for $\ell = 2, \dots, L$.
Here, the vectors $\vect{u}_l\in\mathbb{C}^{MN}$ represent the solution on $[T_l, T_{l+1}]$, and $\vect{v}_l = \Delta T (\vect{Q} \otimes \vect{I}_N)\vect{b}_l$, where $\vect{b}_l$ is the function $b$ evaluated at the corresponding collocation nodes $T_l+t_1 < T_l + t_2 < \dots < T_l + t_M = T_{l+1}$. 
The matrix $\vect{H}$ serves to utilize the previously obtained solution $\vect{u}_{\ell-1}$ as the initial condition for computing $\vect{u}_\ell$.
For $L$ time-steps, equation~\eqref{eq:seq_collocation_problem} can be cast as an "all-at-once" or a "composite collocation" system:

\begin{align*}
\begin{bmatrix}
\vect{C}_\coll 	& 		& 		& 		\\
\sm \vect{H} 	& \vect{C}_\coll & 		& 		\\
		&\ddots		&\ddots		&		\\
		&		&\sm \vect{H}	&\vect{C}_\coll 	\\
\end{bmatrix}
\begin{bmatrix}
	\vect{u}_1\\
	\vect{u}_2\\
	\vdots \\
	\vect{u}_L\\
\end{bmatrix}
&=
\begin{bmatrix}
	\vect{u}_0 + \vect{v}_1\\
	\vect{v}_2\\
	\vdots \\
	\vect{v}_L\\
\end{bmatrix}.
\end{align*}
More compactly, this can be written as 
\begin{align}\label{eq:globalsys}
    \vect{C}\tvect{u} := (\vect{I}_L\otimes\vect{C}_\coll + \vect{E}\otimes\vect{H})\tvect{u} = \tvect{w}
\end{align}
with $\tvect{u} = \left(\vect{u}_1, \vect{u}_2, \dots, \vect{u}_L\right)^T\in\mathbb{C}^{LMN}$, $\tvect{w} = \left(\vect{u}_0 + \vect{v}_1, \vect{v}_2, \dots, \vect{v}_L\right)^T\in\mathbb{C}^{LMN}$, where the matrix $\vect{E}\in\mathbb{R}^{L\times L}$ has $\sm 1$ on the lower sub-diagonal and zeros elsewhere, accounting for the transfer of the solution from one step to another.

\subsection{The ParaDiag approach for the composite collocation problem}
The system~\eqref{eq:globalsys} is well-posed as long as~\eqref{eq:collocation} is well-posed.
In order to solve it in a simple, yet efficient way, one can use preconditioned Richardson iterations where the preconditioner is a diagonalizable block-circulant Toeplitz matrix defined as
\begin{align}\label{eq:prec}
\vect{C}_\alpha &:= \vect{I}_L \otimes \vect{C}_\coll + \vect{E}_\alpha \otimes \vect{H}, \quad
\vect{E}_\alpha := 
\begin{bmatrix}
0 	& 		& 		&-\alpha	\\
-1 	& 0 & 		& 		\\
		&\ddots		&\ddots		&		\\
		&		& -1	&0	\\
\end{bmatrix},
\end{align}
for $\alpha > 0$.
This yields an iteration of the form
\begin{align}\label{eq:Riteration}
 \vect{C}_\alpha \tvect{u}^{(k+1)} &= (\vect{C}_\alpha - \vect{C})\tvect{u}^{(k)} + \tvect{w}.
\end{align}
Conveniently, $\vect{E}_\alpha$ can be diagonalized.
\begin{lemma}\label{lemma:diag}
The matrix $\vect{E}_\alpha \in \R^{L \times L}$ defined in~\eqref{eq:prec} can be diagonalized as $\vect{E}_\alpha = \vect{V} \vect{D} \vect{V}^{-1}$, where $\vect{V} = \frac{1}{L}\vect{J} \vect{F}$ and
\begin{gather*}
\vect{D}_\alpha = \operatorname{diag}(d_1, \dots, d_L), \, d_l = -\alpha^{\frac{1}{L}}e^{-2\pi i\frac{l-1}{L}}, \\
\vect{J} = \operatorname{diag}(1, \alpha^{-\frac{1}{L}}, \dots, \alpha^{-\frac{L-1}{L}}) \\
[\vect{F}]_{jk} = e^{2\pi i\frac{(j-1)(k-1)}{L}}, \, 1 \leq j, k \leq L, \\
\vect{V}^{-1} = \vect{F}^*\vect{J}^{-1}.
\end{gather*}
\end{lemma}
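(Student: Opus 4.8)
The plan is to verify the claimed diagonalization directly by multiplying out $\vect{E}_\alpha \vect{V}$ and $\vect{V} \vect{D}_\alpha$ and checking they agree, or equivalently to observe that $\vect{E}_\alpha$ is similar to a circulant matrix, which is diagonalized by the Fourier matrix. Concretely, I would first note that $\vect{E}_\alpha$ is an $\alpha$-circulant: its entries depend only on $(j-k) \bmod L$ up to the factor $\alpha$ that appears when the index wraps around. The key algebraic fact is that conjugating by the diagonal matrix $\vect{J} = \diag(1, \alpha^{-1/L}, \dots, \alpha^{-(L-1)/L})$ turns $\vect{E}_\alpha$ into an ordinary circulant matrix. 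That is, I would show
\begin{align*}
\vect{J}^{-1} \vect{E}_\alpha \vect{J} = \alpha^{1/L}\, \vect{P},
\end{align*}
where $\vect{P}$ is the standard cyclic permutation (shift) matrix with ones on the lower sub-diagonal and a single one in the top-right corner. This follows by a direct entrywise computation: the $(j, j-1)$ entries of $\vect{E}_\alpha$ pick up $\alpha^{-(j-1)/L} \cdot \alpha^{(j-2)/L} = \alpha^{-1/L}$ scaling, while the corner entry $-\alpha$ in position $(1, L)$ picks up $\alpha^{-0/L}\cdot\alpha^{(L-1)/L} = \alpha^{(L-1)/L}$, and $-\alpha \cdot \alpha^{(L-1)/L} = -\alpha^{1/L} \cdot \alpha^{(L-1)/L} \cdot \alpha^{(L-1)/L}$...

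so I would be careful here: the cleanest route is to absorb the $-\alpha^{1/L}$ into the eigenvalues rather than into $\vect{P}$, i.e. show $\vect{J}^{-1}\vect{E}_\alpha\vect{J}$ is the circulant generated by the first column $(0, -\alpha^{1/L}, 0, \dots, 0)^T$, and that this circulant is $-\alpha^{1/L}$ times the cyclic shift. Then I would invoke the standard fact that every circulant $\vect{G}$ with first column $g$ satisfies $\vect{G} = \vect{F}^{-1} \diag(\vect{F} g) \vect{F}$ with $\vect{F}$ the DFT matrix as defined in the statement (using the convention $\vect{F}\vect{F}^* = L \vect{I}_L$, so $\vect{F}^{-1} = \frac{1}{L}\vect{F}^*$). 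Applying this to our circulant gives eigenvalues $d_l = -\alpha^{1/L} e^{-2\pi i (l-1)/L}$, matching $\vect{D}_\alpha$.

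Assembling the pieces: from $\vect{J}^{-1}\vect{E}_\alpha \vect{J} = \frac{1}{L}\vect{F}^* \vect{D}_\alpha \vect{F} \cdot$ (with the appropriate normalization) I would rearrange to $\vect{E}_\alpha = \vect{J}\,\frac{1}{L}\vect{F}^*\, \vect{D}_\alpha\, \vect{F}\, \vect{J}^{-1}$. Wait — the statement has $\vect{V} = \frac{1}{L}\vect{J}\vect{F}$ and $\vect{V}^{-1} = \vect{F}^*\vect{J}^{-1}$, so I should instead write the circulant diagonalization in the form $\vect{F}\vect{G}\vect{F}^* = L\,\diag(\ldots)$ consistent with those conventions, then read off that $\vect{V}\vect{D}_\alpha\vect{V}^{-1} = \frac{1}{L}\vect{J}\vect{F}\vect{D}_\alpha\vect{F}^*\vect{J}^{-1}$ equals $\vect{E}_\alpha$. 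I would also double-check the consistency condition $\vect{V}\vect{V}^{-1} = \frac{1}{L}\vect{J}\vect{F}\vect{F}^*\vect{J}^{-1} = \frac{1}{L}\vect{J}(L\vect{I}_L)\vect{J}^{-1} = \vect{I}_L$, which confirms the normalization is internally correct.

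The main obstacle is purely bookkeeping: getting all the normalization constants ($1/L$ versus $1/\sqrt{L}$), the sign conventions in the exponent of $\vect{F}$, and the direction of the shift (sub-diagonal versus super-diagonal, which flips the sign of the frequency) to line up exactly with the formulas as stated, rather than with some transpose or complex-conjugate variant. There is no deep step — once the conjugation-by-$\vect{J}$ reduction to a circulant is in place, everything else is the classical diagonalization of circulants by the DFT. I would therefore spend most of the write-up carefully fixing conventions up front and then present the entrywise check of $\vect{J}^{-1}\vect{E}_\alpha\vect{J}$ and the circulant eigenvalue formula, leaving the final substitution as a one-line verification.
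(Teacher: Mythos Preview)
Your approach is correct: conjugating $\vect{E}_\alpha$ by $\vect{J}$ yields the circulant $-\alpha^{1/L}\vect{P}$ (with $\vect{P}$ the cyclic shift), and the DFT matrix $\vect{F}$ diagonalizes that with eigenvalues $d_l=-\alpha^{1/L}e^{-2\pi i(l-1)/L}$; the normalization check $\vect{V}\vect{V}^{-1}=\vect{I}_L$ you give confirms the stated conventions. The paper does not actually supply a proof---it simply cites~\cite{inversetoeplitz}---so your direct verification is more explicit than what appears here and is exactly the standard argument one would expect that reference to contain; the only caveat is that your write-up should clean up the mid-paragraph hesitations and present the entrywise computation of $\vect{J}^{-1}\vect{E}_\alpha\vect{J}$ once, cleanly.
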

\begin{proof}
The proof can be found in~\cite{inversetoeplitz}.
\end{proof}
The diagonalization property of $\vect{E}_\alpha$ involving the scaled Fourier matrix allows parallelization across time-steps~\cite{Burrage1997} and has been previously analyzed~\cite{mcdonlad2016, mcdonald2018, epsilonpeople, paradiag, slw2021}. 
Using the diagonalization property, each iteration in~\eqref{eq:Riteration} can be computed in three steps:
\begin{subequations} \label{eq:s123}
\begin{gather}
\tvect{x} = (\vect{V}^{-1} \otimes \vect{I}_{MN})((\vect{C}_\alpha - \vect{C})\tvect{u}^{(k)} + \tvect{w}), \label{eq:s1} \\
(\vect{D}_\alpha \otimes \vect{H} + \vect{I}_L \otimes \vect{C}_\coll)\tvect{y} = \tvect{x}, \label{eq:s2} \\
\tvect{u}^{(k+1)} = (\vect{V} \otimes \vect{I}_{MN}) \tvect{y}\label{eq:s3}.
\end{gather} 
\end{subequations}
Step~\eqref{eq:s2} can be performed in parallel since the matrix is block-diagonal while steps~\eqref{eq:s1} and~\eqref{eq:s3} can be computed with a parallel Fast Fourier transform (FFT) in time.

The block diagonal problems in equation~\eqref{eq:s2} can be expressed as
\begin{align*}
    (d_l \vect{H} + \vect{C}_\coll)\vect{y}_l = \vect{x}_l, \quad l=1, \dots L,
\end{align*}
for $d_l\in\mathbb{C}$, or in other words,
\begin{align}\label{eq:inner1}
\big((d_l\vect{H}_M + \vect{I}_{M}) \otimes\vect{I}_{N} - \Delta T \vect{Q} \otimes \vect{A}\big)\vect{y}_l = \vect{x}_l, \quad \ell=1, \dots L. 
\end{align}
Here $\vect{x}_l \in \C^{MN}$ denotes the $l$th block of the vector $\tvect{x} \in \C^{LMN}$. 
One way to approach this problem is to define $\vect{G}_l := d_l\vect{H}_M + \vect{I}_{M}$ for each $l=1, \dots, L$. 
Then, $\vect{G}_l$ is an upper triangular matrix of size $M \times M$ and it is nonsingular for $\alpha \neq 1$. 
Because of this, the linear systems in~\eqref{eq:inner1} can again be rewritten in two steps as
\begin{subequations}
\begin{align}
\big(\vect{I}_{MN} - \Delta T (\vect{Q}\vect{G}_l^{-1}) \otimes \vect{A}\big)\vect{z}_l &= \vect{x}_l, \label{eq:innercoll}\\
(\vect{G}_l \otimes \vect{I}_N)\vect{y}_l &= \vect{z}_l. \label{eq:Gsubstitution}
\end{align}
\end{subequations}
Now, suppose that $\vect{Q}\vect{G}_l^{-1}$ is a diagonalizable matrix for a given $l$ and $\alpha$. 
The exact circumstances under which this is possible are discussed in section~\ref{sec:diagonlizationofQGinv}. 
If so, one can solve~\eqref{eq:innercoll} by diagonalizing a much smaller $M \times M$ matrix and inverting the matrices on the diagonal in parallel, but now across the collocation nodes (or, as cast in~\cite{Burrage1997}, ``across the method'').
More precisely, let $\vect{Q}\vect{G}_l^{-1} = \vect{S}_l \vect{D}_l \vect{S}_l^{-1}$ denote the diagonal factorization, where $\vect{D}_l = \operatorname{diag}(d_{l1}, \dots, d_{lM})$. 
Hence, the inner systems in~\eqref{eq:innercoll} can now be solved in three steps for each $l = 1, \dots, L$ as
\begin{subequations} \label{eq:innerdiag}
\begin{align}
\big( \vect{S}_l \otimes \vect{I}_N\big)\vect{x}_l^{1} &= \vect{x}_l, \label{eq:d1}\\
\big(\vect{I}_{N} - d_{lm} \Delta T \vect{A} \big) \vect{x}_{lm}^{2} &= \vect{x}_{lm}^{1}, \quad m = 1, \dots, M, \label{eq:d2}\\
\big( \vect{S}_l^{-1} \otimes \vect{I}_N\big)\vect{z}_l &= \vect{x}_l^{2}, \label{eq:d3}
\end{align}
\end{subequations}
where $\vect{x}_{lm} \in \C^{N}$ denotes the $m$th block of $\vect{x}_{l} \in \C^{NM}$.

A summary of the full method is presented as a pseudo-code in algorithm~\ref{alg:paralpha}.
It traces precisely the steps described above and indicates which steps can be run in parallel.
We will describe how a sequence of preconditioners, $(\mathbf{C}_{\alpha_k})_{k \in \N}$, can be prescribed, as well as a stopping criterion in Section~\ref{sec:parameter_selection}.

\begin{algorithm}[t]
\caption{Iterations with a given sequence of $(\alpha_k)_{k \in \N}$.}
\label{alg:paralpha}
\hspace*{\algorithmicindent} \textbf{Input:} $(\vect{C}_{\alpha_k})_{k \in \N}, \,  \vect{C}, \, \tvect{w},  \, \tvect{u}^{(0)}, \, \text{tol}$ \\
\hspace*{\algorithmicindent} \textbf{Output:}  $\text{a solution to } \vect{C}\tvect{u} = \tvect{w}$.
\begin{algorithmic}[1]
\STATE{$k = 0$}
\WHILE{not done}
    \STATE{$\vect{D}_{\alpha_k} = \operatorname{diag}(d_1, \dots, d_L), \, d_l = -\alpha_k^{\frac{1}{L}}e^{-2\pi i\frac{l-1}{L}}$}
    \STATE\label{alg:r}{$\tvect{r} = (\vect{C}_{\alpha_k} - \vect{C})\tvect{u}^{(k)} + \tvect{w}$} 
    \COMMENT{compute the right-hand side}
    \STATE{$\widetilde{\tvect{r}} = (\vect{J}^{-1} \otimes \vect{I}_{MN})\tvect{r}$}
    \STATE\label{alg:fft}{$\tvect{x} = \textbf{FFT}(\widetilde{\tvect{r}})$}
    \hfill\COMMENT{perform the parallel FFT}
    \FOR[{solve on $L$ parallel steps}]{$l = 1, \dots, L$ \textbf{in parallel}}
        \STATE{$\vect{G}_l = d_l\vect{H}_M + \vect{I}_{M}$}
        \STATE{$\vect{Q}\vect{G}_l^{-1} = \vect{S}_l \vect{D}_l \vect{S}_l^{-1}$, $\vect{D}_l = \operatorname{diag}(d_{l1}, \dots, d_{lm})$}
        \STATE\label{alg:d1}{$\vect{x}_l^{1} = \big( \vect{S}_l^{-1} \otimes \vect{I}_N\big)\vect{x}_l$}
            \FOR[{solve on $M$ parallel nodes}]{$m = 1, \dots, M$ \textbf{in parallel}}
                \STATE\label{alg:d2}{\textbf{solve} $\big(\vect{I}_{N} - d_{lm} \Delta T \vect{A} \big) 
                \vect{x}_{lm}^{2} = \vect{x}_{lm}^{1}$}
            \ENDFOR
        \STATE\label{alg:d3}{$\vect{z}_l = \big( \vect{S}_l \otimes \vect{I}_N\big)\vect{x}_l^{2}$}
        \STATE\label{alg:G}{$\vect{y}_l = (\vect{G}_l^{-1} \otimes \vect{I}_N)\vect{z}_l$}
    \ENDFOR
    \STATE\label{alg:ifft}{$\widetilde{\tvect{u}}^{(k+1)} = \textbf{IFFT}(\tvect{y}) $}
    \hfill\COMMENT{perform the parallel IFFT}
    \STATE{$\tvect{u}^{(k+1)} = (\vect{J} \otimes \vect{I}_{MN})\widetilde{\tvect{u}}^{(k+1)}$}
    \hfill\COMMENT{get the new iterate}
    \STATE{$k = k+1$}
\ENDWHILE
\RETURN $\tvect{u}^{(k)}$
\end{algorithmic}
\end{algorithm}
\subsection{Diagonalization of \texorpdfstring{$\vect{Q}\vect{G}_l^{-1}$} {}}\label{sec:diagonlizationofQGinv}

In order to solve~\eqref{eq:innercoll} via diagonalization, we need to study when $\vect{Q}\vect{G}_l^{-1}$ is diagonalizable. 
A sufficient condition is if the eigenvalues of $\vect{Q}\vect{G}_l^{-1}$ are distinct.
Here, we will show that for our matrix $\vect{Q}\vect{G}_l^{-1}$ this is also a necessary condition. 

Let $\vect{y}\in \C^M$, then
\begin{equation}\label{eq:Qonvec}
    [\vect{Qy}]_m = \int_0^{t_m}\sum_{i = 1}^M \vect{y}_i c_i(s)ds,  
\end{equation}
where $c_i$ are the Lagrange interpolation polynomials defined using the collocation nodes.
Recall that $\vect{G}_l = d_l\vect{H}_M + \vect{I}_{M}$ is an upper triangular matrix (see~\eqref{eq:Hm}), with a computable inverse that is $\vect{G}_l^{-1} = \vect{I}_M - r_l\vect{H}_M$, where $r_l = d_l / (1+d_l)$. 
We also know that $\vect{Q}\vect{H}_M = \vect{D}_t\vect{H}_M$, where $\vect{D}_t = \operatorname{diag}(t_1, \dots, t_M)$. 
In conclusion, we can write 
\begin{align}\label{eq:QGL}
\vect{Q}\vect{G}_l^{-1} = \vect{Q} - r_l\vect{Q}\vect{H}_M = \vect{Q} - r_l\vect{D}_t\vect{H}_M.
\end{align}
To formulate the problem, we need to find $\lambda \in \C$ and $\vect{y} \neq 0$ such that $\vect{Q}\vect{G}_l^{-1} \vect{y} = \lambda \vect{y}$. 
Because of the reformulation of $\vect{Q}\vect{G}_\ell^{-1}$ in~\eqref{eq:QGL} and~\eqref{eq:Qonvec}, the eigenproblem can now be reformulated as finding a polynomial $h$ of degree $M-1$ such that
\begin{align*}
\int_0^{t_m} h(s)ds + r_l t_m h(t_M) = \lambda h(t_m), \quad m = 1, \dots, M.
\end{align*}
Substituting $h$ with a derivative $g'$, where the polynomial $g$ is of degree $M$, yields
\begin{align}\label{eq:poli}
g(t_m) - g(0) + r_l t_m g'(t_M)= \lambda g'(t_m), \quad m = 1, \dots, M.
\end{align}
From here we see that the eigenvector represented as a polynomial $g$ is not uniquely defined, because if $g$ is a solution of~\eqref{eq:poli}, then $a_M g(t) + a_0$ is also a solution. 
Without loss of generality, we set $g(0) = 0$ so that $g$ takes the form
\begin{align*}
g(t) = t^M + a_{M-1}t^{M-1} + \dots + a_1t.
\end{align*}
With this assumption, $g$ is uniquely defined since we have $M$ nonlinear equations~\eqref{eq:poli} and $M$ unknowns $(a_{M-1}, \dots, a_1, \lambda)$. 
This is equivalent to the statement that an eigenproblem with distinct eigenvalues has a unique eigenvector if it is a unitary vector. 
This reformulation of the problem leads us to the following lemma.

\begin{lemma}\label{lemma:eigens}
Define 
\begin{align*}
w_M(t) = (t - t_M)\dots (t-t_1) = t^M + b_{M-1}t^{M-1} + \dots + b_0.
\end{align*}
Then, the eigenvalues of $\vect{Q}\vect{G}_l^{-1}$ are the roots of
\begin{align*}
p_M(\lambda) = M!\lambda^M + c_{M-1}\lambda^{M-1} + \dots + c_0,    
\end{align*}
where the coefficients are defined as $c_0 = (r_l + 1)b_0$,
\begin{align*}
c_m = m! \, b_m - r_l \sum_{j = 1}^{M-m} \frac{(m+j)!}{j!}b_{m+j}, \quad 1 \leq m \leq M-1
\end{align*}
and $b_M=1$.
\end{lemma}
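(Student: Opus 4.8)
The plan is to work directly with the reformulated eigenproblem~\eqref{eq:poli}: $\lambda$ is an eigenvalue of $\vect{Q}\vect{G}_l^{-1}$ precisely when there is a polynomial $g$ of degree $M$, normalized as explained before the statement to be monic with $g(0)=0$, such that $g(t_m)+r_l t_m g'(t_M)=\lambda g'(t_m)$ for $m=1,\dots,M$. The first step is to turn these $M$ pointwise conditions into a single polynomial identity. Set
\begin{equation*}
\Phi(t):=g(t)+r_l t\,g'(t_M)-\lambda g'(t)-w_M(t).
\end{equation*}
Then $\Phi(t_1)=\dots=\Phi(t_M)=0$ by~\eqref{eq:poli} and $w_M(t_m)=0$. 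For $M\ge 2$ the terms $r_l t\,g'(t_M)$ and $\lambda g'(t)$ have degree $\le 1$ and $\le M-1$, so the coefficient of $t^M$ in $\Phi$ is $1-1=0$; hence $\deg\Phi\le M-1$ while $\Phi$ has $M$ distinct roots, forcing $\Phi\equiv 0$, i.e.
\begin{equation*}
g(t)+r_l t\,g'(t_M)-\lambda g'(t)=w_M(t).
\end{equation*}
The case $M=1$ is checked directly: there $g(t)=t$, $g'\equiv 1$, $t_1=\Delta T=1$, so~\eqref{eq:poli} forces $\lambda=1+r_l$, which is the unique root of $p_1(\lambda)=\lambda+c_0=\lambda-(1+r_l)$.

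Next I would solve this identity for $g$. The linear map $g\mapsto g-\lambda g'$ on polynomials of degree $\le M$ is invertible, with inverse $g\mapsto\sum_{k=0}^{M}\lambda^k g^{(k)}$ (differentiation being nilpotent there). Writing $\nu:=g'(t_M)$ and using $\sum_{k=0}^{M}\lambda^k\frac{d^k}{dt^k}t=t+\lambda$, the identity gives
\begin{equation*}
g(t)=\sum_{k=0}^{M}\lambda^k w_M^{(k)}(t)-r_l\nu\,(t+\lambda).
\end{equation*}
Imposing the two remaining scalar conditions then pins down $\lambda$. First, $g(0)=0$ together with $w_M^{(k)}(0)=k!\,b_k$ yields $\sum_{k=0}^{M}k!\,b_k\lambda^k=r_l\nu\lambda$. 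Second, the self-consistency $\nu=g'(t_M)$ --- obtained by differentiating the formula for $g$ and evaluating at $t_M$, noting $w_M^{(k+1)}\equiv 0$ for $k\ge M$ --- yields $(1+r_l)\nu=\sum_{j=1}^{M}w_M^{(j)}(t_M)\,\lambda^{j-1}$.

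Finally I would eliminate $\nu$. One has $\lambda\neq 0$, since $\vect{Q}$ (hence $\vect{Q}\vect{G}_l^{-1}$) is nonsingular --- $\det\vect{Q}=\frac1{M!}\prod_m t_m\neq0$ --- equivalently $p_M(0)=c_0=(1+r_l)b_0\neq0$. Cross-multiplying the two relations gives
\begin{equation*}
(1+r_l)\sum_{k=0}^{M}k!\,b_k\lambda^k=r_l\sum_{j=1}^{M}w_M^{(j)}(t_M)\,\lambda^{j}.
\end{equation*}
The decisive observation is that the rightmost Gauss-Radau node is $t_M=\Delta T=1$, so $w_M^{(j)}(t_M)=w_M^{(j)}(1)=\sum_{i=j}^{M}\frac{i!}{(i-j)!}\,b_i$, which is free of the node locations except through the $b_i$. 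Subtracting the two sides and reading off the coefficient of $\lambda^m$ gives $c_0=(1+r_l)b_0$ and, for $1\le m\le M$,
\begin{equation*}
(1+r_l)\,m!\,b_m-r_l\sum_{i=m}^{M}\frac{i!}{(i-m)!}\,b_i=m!\,b_m-r_l\sum_{j=1}^{M-m}\frac{(m+j)!}{j!}\,b_{m+j}=c_m,
\end{equation*}
the $\lambda^M$ coefficient being $M!\,b_M=M!$; so the eliminated equation is exactly $p_M(\lambda)=0$. Running the construction in reverse (given a root $\lambda$ of $p_M$, necessarily nonzero, define $\nu$ from the first scalar relation, then $g$ from the displayed formula and $h:=g'$; the vanishing of $p_M(\lambda)$ makes the second relation hold, $h\not\equiv0$ since $g$ is monic of degree $M$, and its nodal values are not all zero as a degree-$(M-1)$ polynomial cannot vanish at $M$ points) shows conversely that every root of $p_M$ is an eigenvalue. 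Hence the eigenvalues of $\vect{Q}\vect{G}_l^{-1}$ are exactly the roots of $p_M$.

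The step I expect to be the main obstacle is the degree bookkeeping that produces the polynomial identity for $g$: one must exploit that $r_l t\,g'(t_M)$ is only linear, so that for $M\ge2$ it leaves the leading coefficient of $\Phi$ untouched (while for $M=1$ it does not, hence the separate trivial check), and one must notice that the node-independent closed form of the $c_m$ hinges on $t_M=1$, i.e.\ on using Gauss-Radau quadrature with the right endpoint included. Everything after that is routine manipulation of the Taylor coefficients of $w_M$.
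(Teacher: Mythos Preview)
Your proof is correct and shares the paper's key step---reducing the $M$ pointwise conditions to the single polynomial identity $g(t)+r_l t\,g'(1)-\lambda g'(t)=w_M(t)$---but the algebra after that is organized differently. The paper matches coefficients of this identity to obtain the linear system $a_m-(m+1)\lambda a_{m+1}=b_m$ (plus the two special equations carrying $b_0$ and $r_l g'(1)$), telescopes to get $a_m=\sum_{j}\frac{(m+j)!}{m!}\lambda^j b_{m+j}$, and then substitutes back to assemble $p_M$. You instead invert the nilpotent operator $I-\lambda D$ in one stroke to write $g=\sum_k\lambda^k w_M^{(k)}-r_l\nu(t+\lambda)$, leaving only two scalar constraints ($g(0)=0$ and the self-consistency $\nu=g'(1)$) whose elimination gives $p_M$ directly. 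Your route is cleaner: it replaces the index bookkeeping of the telescoping by a single operator identity, and it makes transparent why the formula closes only because $t_M=1$ (so that $w_M^{(j)}(t_M)$ is a polynomial in the $b_i$). You also make the converse explicit---every root of $p_M$ yields a nonzero eigenvector---whereas the paper records this only after the proof. The price is that you treat $M=1$ separately (the linear term $r_l t\,g'(t_M)$ spoils the degree count there), while the paper's coefficient-matching works uniformly; this is harmless.
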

\begin{proof}
Without loss of generality let $t_M = 1$. 
The eigenproblem is equivalent to solving $M$ nonlinear equations
\begin{align} \label{eq:g1}
g(t_m) + r_l t_m g'(1) - \lambda g'(t_m) = 0, \quad m = 1, \dots, M,
\end{align}
where $g(t) = t^M + a_{M-1}t^{M-1} + \dots + a_1t$ and $\lambda \in \C$. 
Now we define
\begin{align*}
G(t) = g(t) + r_l t g'(1) - \lambda g'(t).
\end{align*}
It holds that $G(t_m) = w_M(t_m)$ for $m = 1, \dots M$ and that the difference $G - w_M$ is a polynomial of degree at most $M-1$, since both $G$ and $w_M$ are monic polynomials of degree $M$.
Because $G - w_M$ is zero in $M$ different points, we conclude that $G = w_M$. 
Equations~\eqref{eq:g1} can compactly be rewritten as
\begin{align}\label{eq:compactpoli}
g(t) + r_l t g'(1) - \lambda g'(t) = w_M(t).
\end{align}
Since the polynomial coefficients on the left-hand side of~\eqref{eq:compactpoli} are the same as on the right-hand side, we get
\begin{subequations}
\begin{align}
-\lambda a_1 &= b_0 \tag{*0} \label{eq:*0} \\
a_1 + r_l g'(1) - 2 \lambda a_2 &= b_1 \tag{*1} \label{eq:*1} \\
a_2 - 3 \lambda a_3 &= b_2 \tag{*2} \label{eq:*2}\\
 &\vdots \notag\\
a_{M-2} - (M-1)\lambda a_{M-1} &= b_{M-2} \tag{*M-2} \label{eq:*M-2}\\ 
a_{M-1} - M\lambda &= b_{M-1}. \tag{*M-1} \label{eq:*M-1}
\end{align}
\end{subequations}
Telescoping these equations starting from~\eqref{eq:*M-1} up to~(*m) for $m \geq 2$, we get
\begin{align*}
a_m = \sum_{j=0}^{M-m} \lambda^j \frac{(m+j)!}{m!} b_{m + j},
\end{align*}
and from~\eqref{eq:*0} we get $a_1 = -b_0/\lambda$. Note that $\lambda \neq 0$ since both $\vect{Q}$ and $\vect{G}_l$ are nonsingular. 
The fact that $\vect{Q}$ is a nonsingular matrix is visible because it is a mapping in a fashion
\begin{align}\label{eq:vandermonde} 
(t_1^m, \dots, t_M^m) \rightarrow \frac{1}{m+1} (t_1^{m+1}, \dots, t_M^{m+1}), \quad 0 \leq m \leq M-1.
\end{align}
The vectors in~\eqref{eq:vandermonde} form columns of the Vandermonde matrix $\vect{W}$ which is known to be nonsingular when $0 < t_1 < \dots < t_M$. 
Because of~\eqref{eq:vandermonde}, we have $\vect{Q}\vect{W} = \vect{D}_M\vect{W}$, where $\vect{D}_M= \operatorname{diag}(1, 1/2, \dots, 1/M)$. 
Now, from here we see that $\vect{Q}$ is nonsingular as a product of nonsingular matrices. 

Substituting the expression for $a_2$ and $a_1$ into~\eqref{eq:*1} and multiplying it with $-\lambda \neq 0$ yields
\begin{align}\label{eq:pp2}
-\lambda r_lg'(1) +  \sum_{m=0}^{M} m! \, \lambda^{m}b_{m} = 0.
\end{align}
It remains to compute $g'(1)$. We have
\begin{align*}
g'(1) &= M + (M-1)a_{M-1} + \dots + 2a_2 + a_1 \\
&= M + \sum_{m=2}^{M-1} m \sum_{j=0}^{M-m} \lambda^j \frac{(m+j)!}{m!} b_{m + j} - \frac{b_0}{\lambda} ,
\end{align*}
whereas
\begin{align*}
\lambda g'(1) = \sum_{m=2}^{M} \sum_{j=0}^{M-m} \lambda^{j+1} \frac{(m+j)!}{(m-1)!} b_{m + j} - b_0.    
\end{align*}
Now we have to shift the summations by 1 and reorder the summation
\begin{align}\label{eq:pp3}
\lambda g'(1) + b_0 = \sum_{m=1}^{M-1} \sum_{j=1}^{M-m} \lambda^j \frac{(m+j)!}{m!} b_{m + j} = \sum_{j=1}^{M-1} \lambda^j \sum_{m=1}^{M-j} \frac{(m+j)!}{m!} b_{m + j}. 
\end{align}
Combining~\eqref{eq:pp2} and~\eqref{eq:pp3} gives a polynomial in the $\lambda$ variable with coefficients being exactly as stated in the lemma.
\end{proof}

\begin{remark}
Because of equation~\eqref{eq:QGL}, lemma~\ref{lemma:eigens} also gives a scaled characteristic polynomial $p_M$ of an arbitrary implicit Runge-Kutta matrix $\vect{Q}$ in the special case when $r_l = 0$.
\end{remark}

From the proof of lemma~\ref{lemma:eigens}, it is visible that each $\lambda$ generates exactly one polynomial $g$ representing an eigenvector, therefore the matrix $\vect{Q}\vect{G}_l^{-1}$ is diagonalizable if and only if the eigenvalues are distinct. 

Lemma~\ref{lemma:eigens} also provides an analytic way of pinpointing the values $r_l$ where $\vect{Q}\vect{G}_l^{-1}$ is not diagonalizable. 
This can be done via obtaining the polynomial of roots for the Gauss-Radau quadrature as $R_M = P_{M-1} + P_M$, where $P_M$ is the $M$th Legendre polynomial which can be obtained with a recursive formula, see~\cite{numODE} for details.
Since $R_M$ has roots in the interval $[-1, 1]$ with the left point included, a linear substitution is required of form $x(t) = -2t + 1$ to obtain the corresponding collocation nodes for the Gauss-Radau quadrature on $[0, 1]$. 
Then the monic polynomial colinear to $(R_M \circ x)(t)$ is exactly $w_M(t)$. 

Now it remains to find the values of $r_l$ for which $p_M$ defined in lemma~\ref{lemma:eigens} has distinct roots.
This can be done  using the discriminant of the polynomial and computing these finitely many values since the discriminant of a polynomial $\Delta_{p_M}$ is nonzero if and only if the roots are distinct. 
The discriminant of a polynomial is defined as $\Delta_{p_M} = (-1)^{M(M-1)/2}/c_M\operatorname{Res}(p_M, p_M')$, and $\operatorname{Res}$ is the residual between two polynomials.
Alternatively, it is often easier to numerically compute the roots of $p_M$ for a given $\alpha$.
For $M = 2, 3$, we give the analysis here.
Note that for $M=1$, the diagonalization is trivial since the matrix $\vect{Q}$ is of size $1 \times 1$.

\begin{example} \label{ex:M2}
When $M=2$, the corresponding polynomial is $w_2(t) = t^2-\frac{4}{3}t + \frac{1}{3}$, generating $p_2(\lambda) = 2\lambda^2 - (\frac{4}{3} + 2r)\lambda + \frac{r + 1}{3}$. 
Solving the equation $\Delta_{p_2} = 0$ is equivalent to $9r^2+6r-2=0$ and the solutions are $r_* = \frac{-1 \pm 3\sqrt{3}}{13}$. 
From here, we can compute $\alpha_*$ which could generate these values.
Then, the matrix $\vect{Q}\vect{G}_l^{-1}$ is not diagonalizable for $\alpha_* \approx 0.323^L, \, 0.477^L$ and these values should be avoided. 
These values are already of order $10^{-8}$ for $L \approx 25$ which is something that should be avoided anyways since $\alpha$ this small tends to generate a large round-off error for the outer diagonalization.
\end{example}
\begin{example}\label{ex:M3}
When $M=3$, the corresponding polynomial is $w_3(t) = t^3 - \frac{9}{5}t^2 + \frac{9}{10}t-\frac{1}{10}$, generating $p_3(\lambda) = 6 \lambda^3 - (\frac{18}{5} + 6r) \lambda^2 + (\frac{9}{10} + \frac{3}{5}r)\lambda - \frac{r+1}{10}$. 
Solving the equation $\Delta_{p_3} = 0$ is equivalent to $1700 r^4 + 3560 r^3 + 1872 r^2 + 18 r + 9 = 0$ and the solutions are $r_* \approx -1.0678, -1.0259, -0.000214 \pm 0.069518 i$.
This generates $\alpha_* \approx 0.516^ L, \, 0.504^L, \, 0.069^L$, for which the diagonalization of $\vect{Q}\vect{G}_l^{-1}$ is not possible.
These alphas are already very small for $L \approx 25$ and should not be used anyways.
\end{example}
 
Examples~\ref{ex:M2} and~\ref{ex:M3} show that for $M=2,3$ and a sufficient number of time-steps $L$, the diagonalization of $\vect{Q}\vect{G}_l^{-1}$ is possible for every time-step $[T_{l-1}, T_l], \, l=1, \dots, L$. 
We also identified that $ \alpha$ around which the diagonalization of the inner method is not possible.

In the next section, we now derive a way to select the parameter $\alpha$ for each iteration automatically.
\section{Parameter selection}\label{sec:parameter_selection}
After each outer iteration in Algorithm~\ref{alg:paralpha}, a numerical error is introduced. 
Let $\Delta \tvect{u}^{(k+1)}$ denote the error arising after performing the $(k+1)$th iteration for input $\tvect{u}^{(k)}$.
We seek $\alpha$ for each iteration so that the errors and the convergence rate of the method are balanced out.
Our $(k+1)$th error to the solution $\tvect{u}^*$ can be expressed and bounded as
\begin{align}\label{eq:errorstriangle}
    \|\tvect{u}^{(k+1)} + \Delta \tvect{u}^{(k+1)} - \tvect{u}^*\| \leq c_\alpha\|\tvect{u}^{(k)} - \tvect{u}^*\| + \|\Delta \tvect{u}^{(k+1)}\|,
\end{align}
for some constant $c_\alpha > 0$.
The constant $c_\alpha$ is the contraction rate and satisfies the inequality $\|\tvect{u}^{(k+1)} - \tvect{u}^*\| \leq c_\alpha\|\tvect{u}^{(k)} - \tvect{u}^*\|$ on unperturbed values.
Moreover, we expect $\|\Delta \tvect{u}^{(k+1)}\| \rightarrow \infty$ and $c_\alpha \rightarrow 0$ when $\alpha \rightarrow 0$, making the decision on which parameter $\alpha$ to choose for the next iteration ambiguous, but highly relevant for the convergence of the method. 
Bounding the second $\alpha$-dependent term on the right-side of~\eqref{eq:errorstriangle} and approximating $c_\alpha$ will bring us closer in finding suitable $\alpha_{k+1}$ to use for the computation of $\tvect{u}^{(k+1)}$.

\subsection{Convergence}
In order to formulate this problem more precisely, we will utilize the error analysis from~\cite{slw2021convergence}, which we restate here for completeness.

\begin{theorem}\label{theorem:convergencetheorem}
Assume that the matrix $\vect{A} \in \C^{N \times N}$ is diagonalizable as $\vect{A} = \vect{V}_A \vect{D}_A \vect{V}_A^{-1}$ and define $\vect{W} = \vect{I}_L \otimes \vect{V}_A$.
Let $\vect{u}^*$ denote the solution of the composite collocation problem~\eqref{eq:globalsys}.
Then for any $k \geq 1$ it holds
\begin{align*}
    \|\tvect{u}^{(k+1)} - \tvect{u}^*\|_{\vect{W}, \infty} \leq \frac{\alpha}{1-\alpha}\|\tvect{u}^{(k)} - \tvect{u}^*\|_{\vect{W}, \infty}\, ,
\end{align*}
provided that the time-integrator~\eqref{eq:collocation} satisfies $\rho(\vect{C}_\coll) < 1$ and $\|\tvect{u}\|_{\vect{W}, \infty} = \|\vect{W}\tvect{u}\|_{\infty}$.
\end{theorem}
\begin{proof}
See~\cite{slw2021convergence}.
\end{proof}
Using the above result, we can write
\begin{align*}
\|\tvect{u}^{(k+1)} - \tvect{u}^*\|_\infty &\leq \|\vect{W}^{-1}\|_\infty \|\vect{W}(\tvect{u}^{(k+1)} - \tvect{u}^*)\|_\infty \\
& \leq \|\vect{W}^{-1}\| \frac{\alpha}{1-\alpha}\|\vect{W}(\tvect{u}^{(k)} - \tvect{u}^*)\|_\infty \\
& \leq \kappa_\infty(\vect{W}) \frac{\alpha}{1-\alpha} \|\tvect{u}^{(k)} - \tvect{u}^*\|_\infty.
\end{align*}
This then leaves the question of how to bound the second error term $\|\Delta \tvect{u}^{(k+1)}\|$ in equation~\eqref{eq:errorstriangle}.


\subsection{Bounds for computation errors}\label{sec:bounds}
Alongside the round-off errors, the existing bound assumes exact system solves, which, in reality, is never the case \cite{wr2019}. 
Because the following error bounds are very general and can be applied to any diagonalization-based algorithm, we state them in simplified notation.
A similar analysis has been done in~\cite{round_off}, however, it was performed on triangular matrices and assuming exact system solves.

The three steps in the diagonalization computation~\eqref{eq:s123} can be generally analyzed in this order: a matrix-vector multiplication $y = \tA x$, solving a system $\tB z = y$, where $\tB$ is a block diagonal matrix, followed by a matrix-vector multiplication $w = \tC z$, where in our particular case $\tC$ is the inverse of $\tA$.
The errors in each step can be expressed as
\begin{subequations}\label{eq:diaggeneric}
\begin{gather}
y + \Delta y = (\tA + \Delta \tA)x \\
(\tB + \Delta \tB) (z + \Delta z) \approx y + \Delta y \label{eq:es2} \\
w + \Delta w = (\tC + \Delta \tC)(z + \Delta z)
\end{gather}
\end{subequations}
with relative errors satisfying 
\begin{align}\label{eq:relerrors}
\|\Delta \tA\| \leq \varepsilon \|\tA\|, \, \|\Delta \tB\| \leq \varepsilon \|\tB\|, \, \|\Delta \tC\| \leq \varepsilon \|\tC\|,
\end{align}
for some $\varepsilon > 0$, representing machine precision.

\begin{lemma}\label{lemma:ultimate}
Let the system solve in~\eqref{eq:es2} be inexact, in other words $\|(\tB + \Delta \tB) (z + \Delta z) - (y + \Delta y)\| \leq \tau \|y + \Delta y\|$ for some $\tau>0$ and let~\eqref{eq:relerrors} hold. 
Then the norm of the absolute error of $w$ after the diagonalization process~\eqref{eq:diaggeneric} satisfies 
\begin{align*}
\|\Delta w\| \leq \frac{\|\tB^{-1}\|\|\tA\|\|\tC\|}{1 - \varepsilon \kappa(\tB)}(2 \varepsilon + \tau + \varepsilon \kappa(\tB))\|x\| + O(\tau \varepsilon + \varepsilon^2),
\end{align*}
where the matrix norm is consistent, i.e.~$\|\tA x\| \leq \|\tA\|\|x\|$, and the condition number is $\kappa(\tA) := \|\tA\|\|\tA^{-1}\|$.
\end{lemma}
\begin{proof}
See appendix~\ref{app:proof}.
\end{proof}

\begin{theorem}\label{theorem:errortheorem}
After one iteration of~\eqref{eq:Riteration}, for $\alpha \in (0, 1)$, under the assumption that the inner system solves in step~\eqref{eq:s2} satisfy $\|\tB (z + \Delta z) - (y + \Delta y)\|_\infty \leq \tau \|y + \Delta y\|$, the error of the $(k+1)th$ iterate can be bounded by
\begin{align}\label{eq:errr1}
\|\Delta \tvect{u}^{(k+1)}\|_\infty \leq \frac{\|\tB^{-1}\|_\infty}{1 - \varepsilon \kappa_\infty(\tB)} \frac{L(2\varepsilon + \tau + \varepsilon \kappa_\infty(\tB))}{\alpha}\|\tvect{r}^{(k)}\|_\infty + O(\varepsilon \tau + \varepsilon^2),
\end{align}
where $\varepsilon$ is machine precision as in~\eqref{eq:relerrors}, $\vect{r}^{(k)} = (\vect{C}_\alpha - \vect{C})\tvect{u}^{(k)} + \tvect{w}$ and $\tB = \vect{D}_\alpha \otimes \vect{H} + \vect{I}_L \otimes \vect{C}_\coll$. 
Furthermore, for $l = 1, \dots, L$ we have
\begin{align}\label{eq:errr2}
\|\Delta \vect{u}^{(k+1)}_l\|_\infty \leq \frac{\|\tB^{-1}\|_\infty}{1 - \varepsilon \kappa_\infty(\tB)} \alpha^{\frac{-(l-1)}{L}}L(2\varepsilon + \tau + \varepsilon \kappa_\infty(\tB))\|\tvect{r}^{(k)}\|_\infty + O(\varepsilon \tau + \varepsilon^2).
\end{align}
\end{theorem}

\begin{proof}
Let relation~\eqref{eq:relerrors} hold.
Define $\tA := \vect{V}^{-1} \otimes \vect{I}_{MN}$, $\tB = \vect{D}_\alpha \otimes \vect{H} + \vect{I}_L \otimes \vect{C}_\coll$ and $\tC := \vect{V} \otimes \vect{I}_{MN}$ where without loss of generality we can assume $\tA = \vect{V}^{-1}$ and $\tC = \vect{V}$. From lemma~\ref{lemma:diag} we see that 
\begin{align*}
    \|\tA\|_\infty\|\tC\|_\infty \leq \frac{1}{L} \|\vect{J}\|_\infty \|\vect{J}^{-1}\|_\infty \|\vect{F}\|_\infty \|\vect{F}^*\|_\infty \leq \frac{L}{\alpha}
\end{align*}
since $\|\vect{F}\|_\infty \leq L$ and $\|\vect{F}^*\|_\infty \leq L$.
The proof for~\eqref{eq:errr1} now follows directly from lemma~\ref{lemma:ultimate}.
Furthermore, if we define $\tA = \vect{V}^{-1}$ and $\tC = \frac{1}{L}\vect{F}$, then
\begin{align*}
\|\tA\|_\infty\|\tC\|_\infty \leq \frac{1}{L} \|\vect{J}^{-1}\|_\infty \|\vect{F}\|_\infty\|\vect{F}^*\| \leq L.
\end{align*}
If now we define $\Delta w = (\vect{J}^{-1} \otimes \vect{I}_{MN})\Delta \tvect{u}^{(k+1)}$, we have
\begin{align*}
\|\vect{u}^{(k+1)}_l\|_\infty =  \alpha^{\frac{-(l-1)}{L}}\|\Delta w_l\|_\infty \leq \alpha^{\frac{-(l-1)}{L}}\|\Delta w\|_\infty
\end{align*}
and the proof for~\eqref{eq:errr2} again follows directly from lemma~\ref{lemma:ultimate}.
\end{proof}

\begin{remark}
The IEEE standard guarantees that relation~\eqref{eq:relerrors} holds for the infinity norm and some $\varepsilon = 2^{-p}$ representing machine precision.
\end{remark}

The above theorem provides an idea of how the round-off error propagates across the vector $\tvect{u}^{(k+1)}$. 
The larger the number of the time-steps, the larger the round-off error we can expect. 
This means that if the error between consecutive iterates is monitored, it is sufficient enough to do so just on the last time-step.
Thus, we can avoid computing a residual and use the difference between two consecutive iterates at the last time-step as a termination criterion.

\subsection{Choosing the \texorpdfstring{$(\alpha_k)_{k \in \N}$}{} sequence} \label{sec:optimalparameterchoice}
While theorem~\ref{theorem:convergencetheorem} tells us that a small $\alpha$ yields fast convergence, theorem~\ref{theorem:errortheorem} indicates that a smaller $\alpha$ leads to a larger numerical error per iteration. 
Suitable choices of $\alpha$ should balance both aspects.
To achieve that, define $m_0$ so that $\|\tvect{u}^{(0)} - \tvect{u}_*\|_\infty \approx m_0$. 
Using the results of theorems~\ref{theorem:convergencetheorem} and~\ref{theorem:errortheorem} we can approximate 
\begin{align*}
    \|\tvect{u}^{(1)} - \tvect{u}^*\|_\infty \lessapprox \alpha m_0
\end{align*}
for $\alpha \approx 0$ and
\begin{align*}
\|\Delta \tvect{u}^{(1)}\|_\infty \lessapprox \frac{\|\tB^{-1}\|_\infty}{1 - \varepsilon \kappa_\infty(\tB)} \frac{L(2\varepsilon + \tau + \varepsilon \kappa_\infty(\tB))}{\alpha}\|\tvect{r}^{(0)}\|_\infty \lessapprox \frac{L}{\alpha}(3\varepsilon + \tau)\|\tvect{r}^{(0)}\|_\infty,
\end{align*}
where, as before, $\tvect{r}^{(0)} = (\vect{C}_\alpha - \vect{C})\tvect{u}^{(0)} + \tvect{w}$. 
The first estimate originates from the fact that $\kappa_\infty(\vect{W})\alpha/(1 - \alpha) = o(\alpha)$.
We will later verify that the convergence rate is unaffected by the simplification (see figure~\ref{fig:2eqconv}).
This assumption does not change the asymptotics for small alphas, but it allows us to draw more conclusions about the trade-off of errors, especially around $\alpha \approx 0$.
The sharpness of the convergence bound from theorem~\ref{theorem:convergencetheorem} is not very relevant, since the convergence factor is not practical to use it in that form.
Combining these estimates, we get
\begin{align}\label{eq:minimeq}
\|\tvect{u}^{(1)} + \Delta \tvect{u}^{(1)} - \tvect{u}^*\|_\infty \lessapprox \alpha m_0 + \frac{L}{\alpha}(3\varepsilon + \tau)\|\tvect{r}^{(0)}\|_\infty.
\end{align}
The aim is to find $\alpha$ such that the right-hand side of~\eqref{eq:minimeq} is minimized for given $L$, $\varepsilon$, and $\tau$.
Since $(\vect{C}_\alpha - \vect{C})\tvect{u}^{(0)}$ is a block-vector with just $-\alpha \vect{H}\vect{u}^{(0)}_L$ being a nonzero block, we can treat the $\|\tvect{r}^{(0)}\|_\infty$ term in the minimization process roughly as $\|\tvect{w}\|_\infty$ since this part is more relevant. 
Defining $\gamma := L(3\varepsilon + \tau)\|\vect{w}\|_\infty$ for simplicity, we can state the problem as: find $\alpha \in \langle 0, 1\rangle$ such that $m_1 = \alpha m_0 + \frac{\gamma}{\alpha}$ is minimized. 
The solution is visible from the fact that $ m_0\alpha^2 - m_1\alpha + \gamma = 0$ is a parabola with a discriminant $\Delta = m_1^2 - 4m_0\gamma$ and that the smallest value for $m_1$ so that $\Delta \geq 0$ is when $\Delta = 0$. 
This yields $m_1 = 2\sqrt{m_0\gamma}$ and a unique root $\alpha = \sqrt{\frac{\gamma}{m_0}}$.
To interpret this result, we expect the error after one iteration to be around $m_1$ if the parameter $\alpha_1 = \sqrt{\frac{\gamma}{m_0}}$ is used. 

Recursively using~\eqref{eq:minimeq}, we can approximate
\begin{align*}
\|\tvect{u}^{(k+1)} + \Delta \tvect{u}^{(k+1)} - \tvect{u}^*\| \lessapprox \alpha m_k + \frac{\gamma}{\alpha}.
\end{align*}
Solving the same minimization problem now for $m_{k+1} = \alpha m_k + \frac{\gamma}{\alpha}$ yields $m_{k+1} = 2\sqrt{m_k\gamma}$ and $\alpha = \alpha_{k+1} = \sqrt{\frac{\gamma}{m_k}}$. 

If $4\gamma \leq m_k$ holds, then the approximations of errors $m_k$ are decreasing. 
Now we can conclude that the sequence of $(\alpha_k)_{k\in \N}$ is increasing. 
Since $(\alpha_k)_{k\in \N}$ is additionally bounded from above with $\sqrt{\gamma / m_k} \leq 1/2$, we see that it is a convergent sequence with a limit in $(0, 1/2]$. 
Telescoping the recursion for $m_k$ we get $m_k = (4\gamma)^{1-2^{-k}}m_0^{2^{-k}}, \, k \geq 1$, showing that the sequence is asymptotically bounded by $4\gamma$. 
The convergence speed seems to be slower as the method iterates, but this does not have to be the case in practice. 
A remedy to this is to monitor the error of consecutive iterates and because of theorem~\ref{theorem:errortheorem} we can do so just for the last time-step. 
If this error is far lower than $m_k$, it is a good sign the convergence is much better than anticipated. 
Usually, monitoring just the error of consecutive iterates does not detect stagnation, but sees it as convergence. 
However, the combination of both can provide solid information. 
This discussion can be summarized in the following algorithm for the stopping criterion.

\begin{algorithm}[ht]
\caption{A stopping criterion combining approximations of worst-case convergence and an error of consecutive iterates.}
\begin{algorithmic}[1]
\STATE{$\gamma := L(3\varepsilon + \tau)\|\tvect{w}\|_\infty$}
\STATE{$k = 0$}
\WHILE{$m_k >\operatorname{tol}$}
    \STATE{$m_{k+1} = 2\sqrt{m_k\gamma}$}
    \STATE{$\alpha_{k+1} = \sqrt{\gamma / m_k}$}
    \STATE{$\tvect{u}^{(k+1)} = $ \textbf{iterate}($\tvect{u}^{(k)}$, $m_{k+1}$, $\alpha_{k+1}$)}
    \IF{$\|\vect{u}_L^{(k+1)} - \vect{u}_L^{(k)}\|_\infty \leq \operatorname{tol}$}
        \RETURN{$\tvect{u}^{(k+1)}$}
    \ENDIF
    \STATE{$k = k + 1$}
\ENDWHILE
\end{algorithmic}\label{alg:alphas}
\end{algorithm}

The only problem remains how to approximate $m_0$. 
Since it is an approximation of $\|\tvect{u}^{(0)} - \tvect{u}^*\|_\infty$ it is convenient to define a vector filled with initial conditions $\tvect{u}_0$ as the initial iterate $\tvect{u}^{(0)}$. 
Then, from the integral formulation, we have $m_0 \approx \max_{[0, T]}|u(t)-u(0)|\leq TL_u$ where $L_u$ is the local Lipschitz constant of the solution on $[0, T]$. 
If this is inconvenient, one can also use that $m_0 \approx \|\tvect{u}^{(0)} - \tvect{u}_*\|_\infty \leq T \max_{[0, T]}\|Au(t) + b(t)\|_\infty \leq T (\|\vect{A}\|_\infty M_u + M_b)$ where $M_u$ denotes the approximation of a maximum of the solution and $M_b$ is the maximum for the function $b$. 
The best approximate convergence curves are when $m_0$ is a tight approximation. 
One can say that the errors are around values $m_k$ as much as $m_0$ is around $\|\tvect{u}^{(0)} - \tvect{u}_*\|_\infty$. 
In practice, if $m_0$ is an upper bound, then all $m_k$ are an upper bound.

This concludes the discussion about the $(\alpha_k)_{k \in \N}$ sequence. 
In this section, we have proposed an $(\alpha_k)_{k\in\mathbb{N}}$ sequence that is supposed to minimize the number of outer iterations, balancing the unwanted numerical errors and the convergence rate.
In the next section, we will discuss the actual implementation of the algorithm on a parallel machine.
\section{Implementation} \label{sec:implementation}

In this section, an MPI-based parallel implementation is described in detail. 
This is the foundation of all the speedup and efficiency graphs presented in the results section, based on a code written in Python in the framework of \texttt{mpi4py} and \texttt{petsc4py} \cite{py1, py4}. 

\subsection{Parallelization strategy}\label{sec:parallelization}
Let the number of processors be $\nproc = \nstep \ncoll \nspace$, where $\nstep$ groups of processors handle the parallelization across the time-steps and $\ncoll \nspace$ processors handle the parallelization across the method.
Let the number of processors for time-step parallelism be $\nstep = L$, where $L$ is a power of $2$.
This means that each group of $\ncoll \nspace$ processor stores one single approximation $\vect{u}^{(k)}_l\in\mathbb{C}^{MN}$, denoting the corresponding time-step block of $\tvect{u}^{(k)}\in\mathbb{C}^{LMN}$ defined in~\eqref{eq:Riteration}. 
For the parallelization across the method, we set $\ncoll = M$, so that exactly $M$ groups of $\nspace$ processors deal with the collocation problem at each step.
When $\nspace = 1$, each of the $LM$ processors stores a single vector $u_m\in\mathbb{C}^N$ of $\vect{u}^{(k)}_l = (u_1, \dots, u_M)^T$, representing one implicit stage of the collocation problem. 
If $\nspace > 1$, i.e.\ if spatial parallelism is used, then each of the $LM\nspace$ processors only stores a part of these implicit stages.
See figure~\ref{fig:communicators} for examples. 

\begin{figure}[!htb]
\centering
\includegraphics[width=0.8\textwidth]{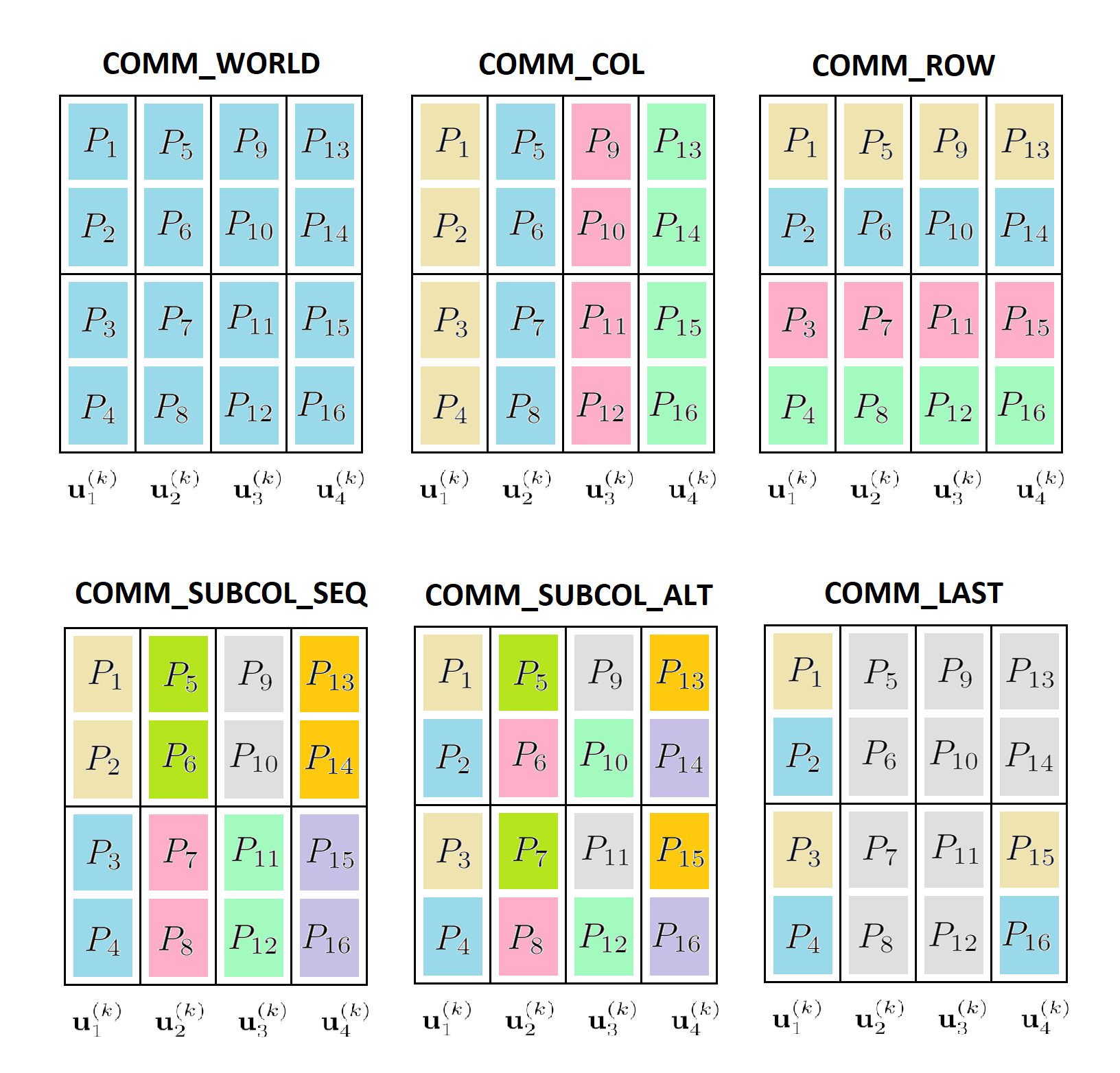}
\caption{This figure represents the communicating groups for $L = 4$ time-steps and the collocation method with $M = 2$ internal stages. There are $\nproc = 16$ processors of what $\nstep = 4$, $\ncoll = 2$ and $\nspace = 2$. Each colored block is a part of the vector locally stored on a processor and the different color groups represent the subcommunicators of the MPI\_COMM\_WORLD. Each column of the table represents the storage for vector $\vect{u}^{(k)}_l$ whereas the upper and the lower parts of the column are for the two implicit stages of the collocation problem. When $\ncoll\nspace = M$, groups COMM\_SUBCOL\_SEQ and COMM\_SUBCOL\_ALT are nonexistent.}
\label{fig:communicators}
\end{figure}

The diagonalization process can be seen as first computing~\eqref{eq:s1} (line~\ref{alg:fft} in algorithm~\ref{alg:paralpha}) as a parallel FFT with a radix-2 algorithm on a scaled vector $(\widetilde{\vect{r}}_1, \dots, \widetilde{\vect{r}}_L)$. 
After that, each group of processors that stored $\widetilde{\vect{r}}_l$ now holds $\vect{x}_{l_i}$, where the index $l_i$ emerges from the butterfly structure which defines the communication scheme of radix-2 (see figure~\ref{fig:radix2}). 
At this stage the problem is decoupled, therefore there is no need to rearrange the vectors back in the original order as done in the standard radix-2 algorithm, which saves communication time. 
The inner system solves in~\eqref{eq:s2} can be carried out on these perturbed indices until the next radix-2 for the parallel IFFT in~\eqref{eq:s3} (line~\ref{alg:ifft} in algorithm~\ref{alg:paralpha}) is performed. 
The butterfly structure for communication is stretched across processors in the COMM\_ROW subgroup and the communication cost is $O\big(2\operatorname{log}_2(L)\big)$ with chunks of memory sent and received being $O(MN/(\ncoll \nspace))$.

\begin{figure}[!htb]
\centering
\includegraphics[width=\textwidth]{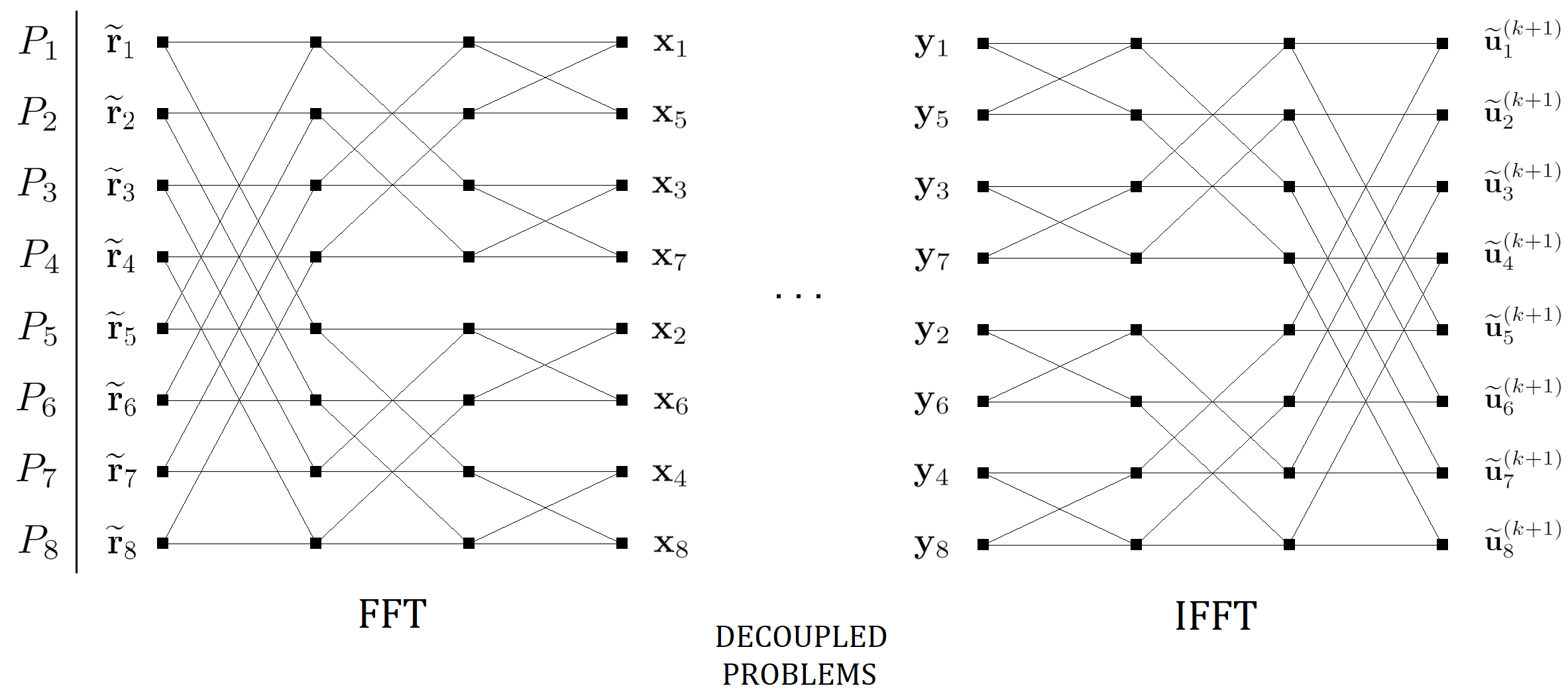}
\caption{An example of a radix-2 butterfly communication structure for $L=\nstep=8$ time-steps and processors. Because the indices after the forward Fourier transform do not need rearranging, the computation proceeds with the perturbed blocks of $\vect{x}_{l_i}$.}
\label{fig:radix2}
\end{figure}

On the other hand, solving the diagonal systems in~\eqref{eq:s2} requires more care. 
Each group and subgroup of COMM\_COL simultaneously and independently solve its own system, storing the solutions in $\vect{y}_l$. 
Excluding the communication time, this part is expected to be the most costly one. 
After forming and diagonalizing the matrix $\vect{Q}\vect{G}_l^{-1}$ locally on each processor, $\vect{x}_l^{1}$ as in~\eqref{eq:d1} (line~\ref{alg:d1} in algorithm~\ref{alg:paralpha}) is firstly computed as
\begin{align*}
(\vect{x}_{l}^{1})_m &= \sum_{j = 1}^M [\vect{S}_l^{-1}]_{mj} (\vect{x}_l)_j, \quad  (\vect{x}_{l}^{1})_m, (\vect{x}_l)_j \in \C^{N}, \, \vect{S}_l\in \C^{M\times M},
\end{align*}
where $(\vect{x}_l)_m$ is a subvector of $\vect{x}_l \in \C^{NM}$ containing indices from $mN$ to $(m+1)N$ which represents the corresponding implicit collocation stage (parts of columns in figure~\ref{fig:communicators} separated with black lines). 
When $\nspace > 1$, this summation can be computed using MPI-reduce on the COMM\_SUBCOL\_ALT level for each $m = 1, \dots, M$ with computational complexity $O(M \log_2(M))$. 
If $\ncoll \leq M$, then the communication complexity is $O(n_\coll \log_2(n_\coll))$ on the COMM\_COL level with chunks of memory sent and received being $O(MN/(\ncoll \nspace))$ in both cases.
Furthermore, equations~\eqref{eq:d3} and~\eqref{eq:Gsubstitution} (lines~\ref{alg:d3} and~\ref{alg:G} in algorithm~\ref{alg:paralpha}) are computed in exactly the same manner as discussed above.

Finally, the inner linear systems after both diagonalizations in~\eqref{eq:d2} (line~\ref{alg:d2} in algorithm~\ref{alg:paralpha}) are solved using GMRES (without a preconditioner).
When $\ncoll \leq M$, each group of $M/\ncoll$ linear systems is solved simultaneously, in a consecutive way within a group, without passing on any additional processors to \texttt{petsc4py}.
If $\nspace > 1$, then we have additional processors for handling the systems of size $N \times N$ in parallel on the COMM\_SUBCOL\_SEQ level which is passed on to \texttt{petsc4py} as a communicator. 
Here in this step, there is a lot of flexibility for defining the actual linear solver since \texttt{petsc4py} is a well-developed library with a lot of different options.
For our purpose, we make use of GMRES~\cite{GMRES} without a preconditioner.

\subsection{Computational complexity and speedup analysis}\label{sec:sande}
In this section, we will explore the theoretical speedup of parallelization in time with our algorithm. 
In order to do that, we need computational complexity estimates for three different cases:
an estimate for a completely sequential implementation $T_\mathrm{seq}$, an estimate if there are $M$ processors available for solving the problem sequentially over time steps and parallel across the collocation points $T_\mathrm{Mpar}$, and lastly, an estimate if we have $LM$ processors to solve in parallel across the steps (lines 6 and 17 in Algorithm \ref{alg:paralpha}) and across the collocation points (lines 11-13 in Algorithm \ref{alg:paralpha}) $T_\mathrm{par}$.
Communication times in this model are ignored, and we assume to handle algebraic operations with the same amount of memory chunks in all cases.
The estimates can be given as:
\begin{align*}
    T_\mathrm{seq} &= L(MT_\mathrm{sol} + 2M^2) = LM(T_\mathrm{sol} + 2M), \\
    T_\mathrm{Mpar} &= L(T_\mathrm{sol} + 2M\log(M)),\\
    T_\mathrm{par} &= k(T_\mathrm{sol,par} + 2\log(L) + 3M\log(M)).
\end{align*}

Quantity $T_\mathrm{seq}$ denotes the computational complexity when solving the collocation problem~\eqref{eq:collocation} sequentially over $L$ time-steps directly via diagonalization of $\vect{Q}$. 
For each step, the solution is obtained by solving each system on the diagonal, one by one, $M$ times, with a solver complexity of $T_\mathrm{sol}$. 
The expression $2M^2$ stands for the two matrix-vector multiplications needed for the diagonalization of $\vect{Q}$.

$T_\mathrm{Mpar}$ denotes the complexity for solving the collocation problem sequentially over $L$ time-steps via diagonalization, except these diagonal systems of complexity $T_\mathrm{sol}$ are solved in parallel across $M$ processors. 
The two matrix-vector products can then be carried out in parallel with a computational complexity of $2M\log(M)$. 

At last,  $T_\mathrm{par}$ represents the complexity of the algorithm with $\nstep = L, \ncoll = M$ processors. 
Here, $k$ denotes the number of outer iterations, and $2\log(L) + 3M\log(M)$ is the complexity of operations when using the communication strategies as discussed in section~\ref{sec:parallelization}.
The solver complexity $T_\mathrm{sol,par}$ may differ from $T_\mathrm{sol}$ since differently conditioned systems may be handled on the diagonals. 
Also, the algorithm requires solving complex-valued problems even for real-valued systems, which could cause an overhead as well, depending on the solver at hand.

Hence, the theoretical speedups for $k\in\mathbb{N}$ iterations look like this:
\begin{subequations}
\begin{align}
    \frac{T_\mathrm{seq}}{T_\mathrm{par}} &= \frac{LM(T_\mathrm{sol} + 2M)}{k(T_\mathrm{sol,par} + 2\log(L) + 3M\log(M))}, \label{eq:speedup1}\\
    \frac{T_\mathrm{Mpar}}{T_\mathrm{par}} &= \frac{L(T_\mathrm{sol} + 2M\log(M))}{k(T_\mathrm{sol,par} + 2\log(L) + 3M\log(M))}. \label{eq:speedup2}
\end{align}
\end{subequations}
The true definition of speedup would be~\eqref{eq:speedup1}, however, the baseline method in our algorithm is also a parallel method (parallel across the method, i.e.~over the collocation nodes), therefore we also need to take~\eqref{eq:speedup2} into account. For $L \geq 2$ we have
\begin{align*}
2\log(L) + 3M\log(M) \geq 2M
\end{align*}
which in combination with~\eqref{eq:speedup1} gives
\begin{align}
\frac{T_\mathrm{seq}}{T_\mathrm{par}} \leq \frac{LM}{k} \frac{T_\mathrm{sol} + 2M}{T_\mathrm{sol,par} + 2M}. \label{eq:speed1}
\end{align}
If we combine the fact that 
\begin{align*}
2\log(L) + 3M\log(M) \geq 2M\log(M)
\end{align*}
is true for all $M, L$ and~\eqref{eq:speedup2}, we get
\begin{align}
\frac{T_\mathrm{Mpar}}{T_\mathrm{par}} \le \frac{L}{k}\frac{T_\mathrm{sol} + 2M\log(M)}{T_\mathrm{sol,par} + 2M\log(M)}. \label{eq:speed2}
\end{align}
We can see that the speedup estimates~\eqref{eq:speed1} and~\eqref{eq:speed2} roughly depend on the ratio between how many steps and nodes are handled in parallel and the number of outer iterations. 
This is not a surprise when dealing with Parareal-based parallel-in-time methods, and our speedup estimates fit the usual theoretical bounds in this field, too. 
Most importantly, as in Parareal, it is crucial to minimize the number of iterations in order to achieve parallel performance. 
This is the reason why carefully choosing the $(\alpha_k)_{k\in\mathbb{N}}$ sequence is important, it brings the number of outer iterations down.
Lowering the outer iteration count for even a few iterations can provide significant speedup.
This claim is not just supported theoretically, but in our test runs as well, as seen in graphs~\ref{fig:strongscaling}.

\section{Numerical results}

We study the behavior of the algorithm for two different linear test problems. 
The equations were chosen in a way that their analytical solutions are known in order to compute the difference between the approximate solution and the exact one.
The time domain is set to $[T_0, T_0 + T]$, where $T$ is chosen so that the error of the stable time-stepping method satisfies the expected discretization order.
We disabled multithreading which is usually silently triggered by  \texttt{numpy}.
A vector filled with initial conditions $\vect{u}^{(0)} = (\vect{u}_0, \dots, \vect{u}_0)$ is chosen as initial guess for the iteration.
All the results presented here were obtained with our Python implementation, which can be found on GitHub~\cite{paralphacode}, and performed on the supercomputer JUWELS~\cite{JUWELS}.
A direct comparison of performance to other methods such as MGRIT and PFASST is out of the scope of this analysis and is left for future work.
This is the main reason why speedup is presented as a measure, compared to wallclock times which vary much more depending on the machine and the implementation.
Some comparison is made between ParaDiag-II, MGRIT, and Parareal in~\cite{para_DIAG}.
In the preliminary work of~\cite{para_DIAG}, the method used for comparison is backward Euler only, without the parameter adaptivity and without consideration of inexactness in system solves.
The approach, already there, shows promising scaling properties, even outperforming the classical PinT methods.

The first test equation here is the heat equation, governed by
\begin{align}\label{eq:heat}
	u_t = \Delta u + \sin(2 \pi x)\sin(2 \pi y) (8\pi^2 \cos(t) - \sin(t)), \text{ on }[\pi, \pi + T] \times [0, 1]^2,
\end{align}
and the exact solution is $u(t, x, y) = \sin(t)\sin(2 \pi x)\sin(2 \pi y)$. 
This equation has periodic boundary conditions which were used to form the discrete periodic Laplacian with central differences bringing the equation into the generic form~\eqref{eq:genericlinearform}.

The second equation is the advection equation defined as
\begin{align}\label{eq:advection}
	u_t + u_x + u_y = 0, \text{ on }[0, T] \times [0, 1]^2,
\end{align}
with exact solution $u(t, x, y) = \sin(2 \pi x - 2 \pi t)\sin(2 \pi y - 2 \pi t)$. 
Here, we again have periodicity on the boundaries which was used to form an upwind scheme.


\subsection{Counting iterations}
The theoretical background of section~\ref{sec:parameter_selection} is verified for the advection equation.
Two things are highlighted in this section: the importance of $\alpha$-adaptivity and how the convergence curve when using adaptivity compares to individual runs with fixed $\alpha$.
Figure~\ref{fig:advection64} compares the convergence when the adaptive $(\alpha_k)_{k \in \N}$ sequence generated on runtime is used, to runs when $\alpha_k$ is a fixed parameter, from that same generated sequence.
For larger values of $\alpha$, we can see a slow convergence speed that can reach better accuracy while for smaller $\alpha$ values the convergence is extremely steep, but short living.
Using the adaptive $(\alpha_k)_{k \in \N}$ requires fewer total iterations to recover a small error tolerance compared to a fixed $\alpha$.

The numbers of discretization points in space and time are chosen so that the error with respect to the exact solution (infinity norm) is below $10^{-12}$, without over resolving in space or time. 
The parameters chosen are $T = 0.0128$, $L = 64$, $M = 3$, $5$th-order upwind scheme in space with $N = 700$.
The inner solver is GMRES with a relative tolerance $\tau=10^{-15}$.
The test for the heat equation looks very similar and is not shown here.


\begin{figure}[!htb]
\centering
\includegraphics[width=0.75\textwidth]{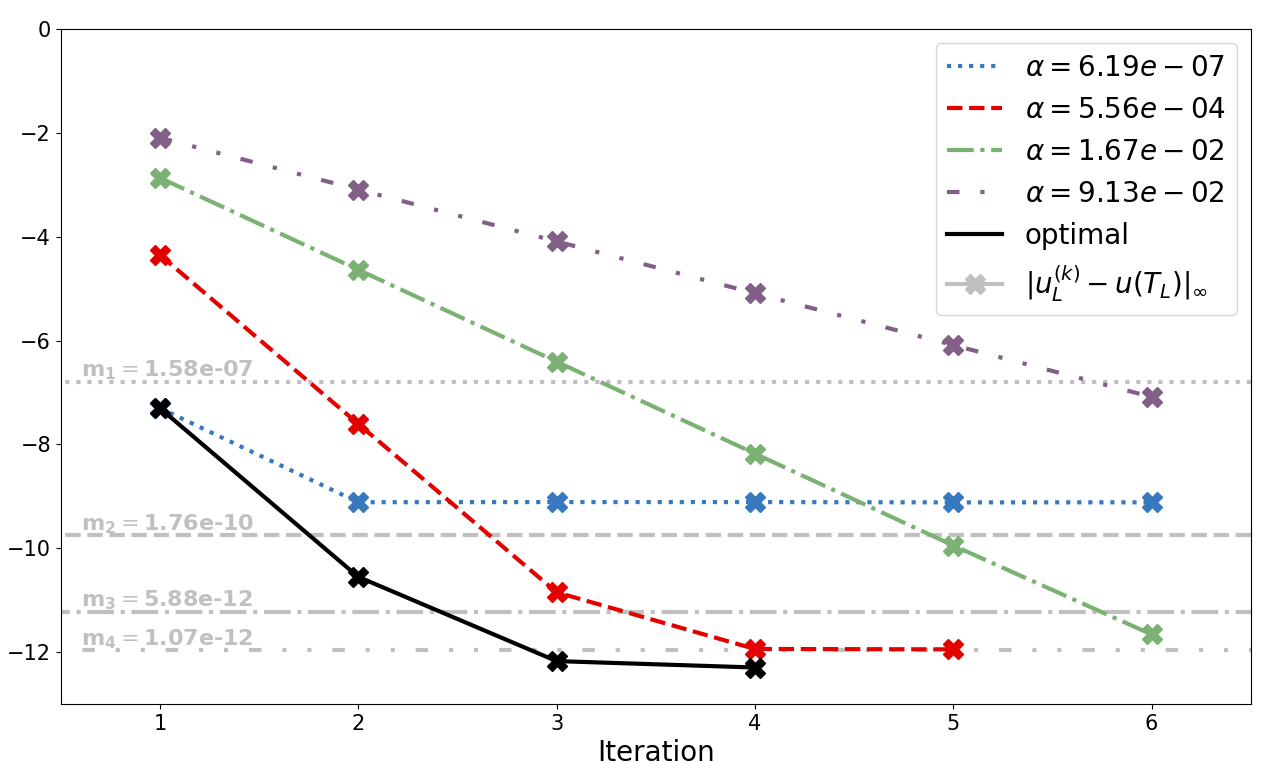}
\caption{Adaptive strategy vs.~convergence for fixed $\alpha_k$ from the sequence. The y-axis represents the error in $\log_{10}$ scale whereas the vertical lines represent the $m_k$ sequence starting with $m_0 = 10\Delta T$. The solid line is the convergence history with the sequence of $\alpha_k$ given as $(6.19 \times 10^{-7}, \, 5.56 \times 10^{-4}, \, 1.67 \times 10^{-2}, \, 9.13 \times 10^{-2} )$.}
\label{fig:advection64}
\end{figure}

Understanding the convergence behavior allows us to roughly predict the number of iterations for a given threshold. 
In order to examine the convergence of the method, we have to test it when solving the same initial value problem with the same domain for three thresholds $10^{-5}, \, 10^{-9}, \, 10^{-12}$.
One challenge for an actual application is the question of when to stop the iterations since the actual error is not available.
As discussed in section~\ref{sec:bounds}, a valid candidate is comparing successive iterates at the last time-step, cf. lemma~\ref{lemma:ultimate}.
To check the impact of this choice, figure~\ref{fig:2eqconv} shows the convergence behavior for the two test problems with the adaptive-$\alpha$ strategy, both for the actual errors and for the difference between successive iterates.
The discretization parameters for each threshold are chosen accordingly for a fixed $T$ (see table~\ref{tab:table1} for details).

\begin{table}[!htb]\footnotesize
\begin{center}
\begin{tabular}{l|lll|lll}
 & \multicolumn{3}{|l|}{Heat, $T = 0.1$} & \multicolumn{3}{|l}{Advection, $T = 10^{-2}$} \\
\hline
 \rule{0pt}{2.5ex}tol. to reach &$10^{-5}$&$10^{-9}$&$10^{-12}$&$10^{-5}$&$10^{-9}$&$10^{-12}$ \\
 no. of spatial points $N$ & 450 & 400 & 300 &  350 & 350 & 600  \\
 order in space $\kappa$ & 2 & 4 & 6 & 2 & 4 & 5 \\
 no. of collocation points $M$ & 1 & 2 & 3 & 2 & 2 & 3  \\
 no. of time steps $L$ & 32 & 32 & 16 & 8 & 16 & 32 \\
 linear solver tolerance $\tau$& $10^{-6}$ & $10^{-10}$ & $10^{-13}$ & $10^{-8}$ & $10^{-11}$ & $10^{-14}$
\end{tabular}
\end{center}
\caption{Parameter choice for solving the heat and advection equation in order to reach an error $\|\vect{u}(T) - \vect{u}_L\|_\infty < \operatorname{tol}$ when solving with a standard sequential approach. $\kappa$ denotes the discretization order in space, where upwind was chosen for the advection equation and centered differences for the heat equation.}
\label{tab:table1}
\end{table}

\begin{figure}[!htb]
\centering
\includegraphics[width=0.99\textwidth]{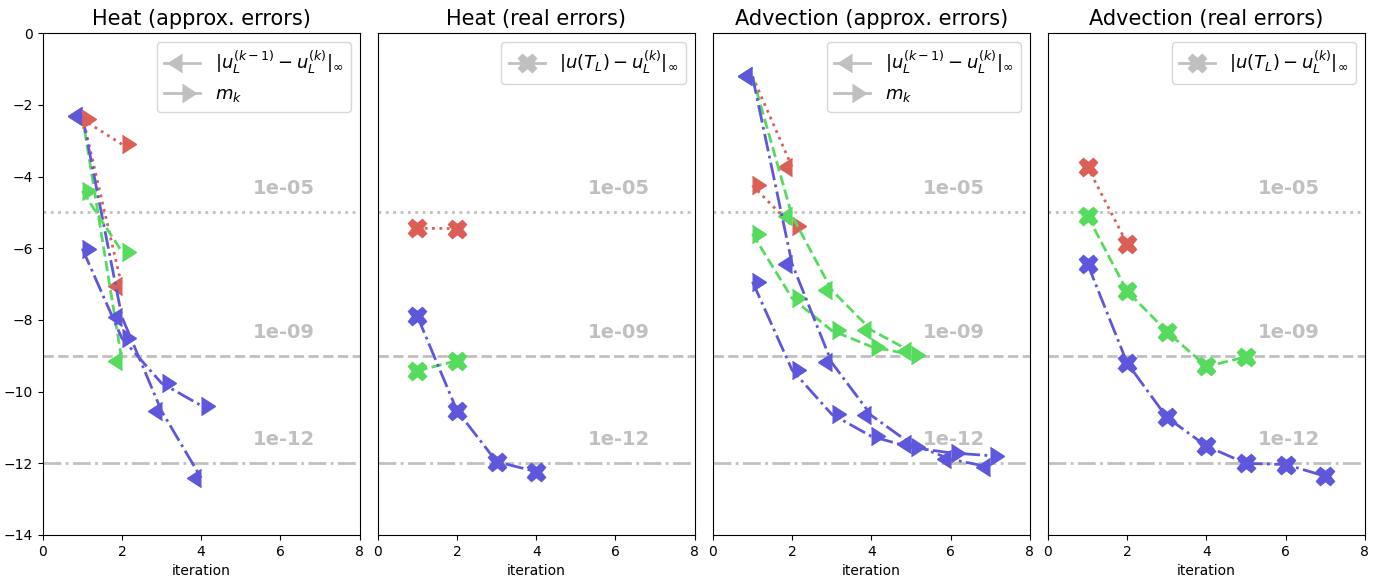}
\caption{Convergence with the adaptive strategy for different thresholds. The y-axis represents the error in $\log_{10}$ scale and vertical lines represent the thresholds for the stopping criteria: red for reaching thresholds under $10^{-5}$, green for $10^{-9}$ and blue for $10^{-12}$. 'approx. errors' graphs contain the information available on runtime which is: the errors of consecutive iterates in the last time-step (marker pointing left) and the approximations of the upper bound for the error in each iteration (marker pointing right), the $m_k$ values starting with $m_0 = \Delta T$.
These values are generated in Algorithm~\ref{alg:alphas} alongside the corresponding $\alpha_k$.
'real errors' graphs show corresponding errors to the exact solution, which is in general unavailable at runtime.
These errors are here as a proof of concept that the $m_k$ values are indeed following the real errors very well.
However, values $m_k$ should be treated in combination with the consecutive iterates, since they are an overestimate.}\label{fig:2eqconv}
\end{figure}

\subsection{Parallel scaling}
We now measure the actual speedup of the method for our two equations and the three different thresholds $10^{-5}$, $10^{-9}$, $10^{-12}$.
The idea is to compare the execution time of $64$ sequential steps solving the collocation problem via diagonalization of $\vect{Q}$ to the parallel method, handling $L =1,2,..., 64$ time-steps simultaneously. 
More precisely, our method is used as a parallel solver for a block of $L$ steps and repeatedly applied sequentially, like a moving window, until capturing all the $64$ time steps.
For each run, $\Delta T = T_{64}/64$ was fixed while $L$ varies (see table~\ref{tab:table2} for details).
The number of spatial and temporal discretization points is chosen in a way so that the method does not over-resolve in space or time (see table~\ref{tab:table2} for details).
Thus, the tolerance is coupled to $M$, simply because there is no point in trying to achieve low errors without deploying a higher-order method both in space and time~\cite{MinionEtAl2015}.
Note that the timings do not include the startup time, the setup, nor the output times.

\begin{table}[!htb]\footnotesize
\begin{center}
\begin{tabular}{l|lll|lll}
 & \multicolumn{3}{|l|}{Heat} & \multicolumn{3}{|l}{Advection} \\
\hline
 \rule{0pt}{2.5ex}tol. to reach &$10^{-5}$&$10^{-9}$&$10^{-12}$&$10^{-5}$&$10^{-9}$&$10^{-12}$ \\
 no. of spatial points $N$ & 350 & 400 & 350 & 800 & 800 & 700 \\
  order in space $\kappa$ & 2 & 4 & 6 & 1 & 3 & 5  \\
 no. of collocation points $M$ & 1 & 2 & 3 & 1 & 2 & 3  \\
 time endpoint $T_{64}$ & $0.32$ & $0.16$ & $0.16$ & $0.00016$ & $0.00064$ & $0.0128$ \\
 linear solver tolerance $\tau$& $10^{-6}$ & $10^{-10}$ & $10^{-14}$ & $10^{-6}$ & $10^{-11}$ & $10^{-14}$ \\
 linear solver tolerance $\widetilde{\tau}$& $10^{-6}$ & $10^{-10}$ & $10^{-13}$ & $10^{-9}$ & $10^{-13}$ & $10^{-15}$ \\
\end{tabular}
\end{center}
\caption{Parameter choice depending on the equation in order to reach an error $\|\vect{u}(T) - \vect{u}_L\|_\infty < \operatorname{tol}$ when solving with a standard sequential approach. Here $\kappa$ denotes the discretization order in space, where upwind was chosen for the advection equation and centered differences for the heat equation. Here $T_{64}$ represents the interval length that is needed so that the error is below $\operatorname{tol}$ after $64$ time-steps.}
\label{tab:table2}
\end{table}

The linear system solver used for the test runs is GMRES with a relative stopping tolerance $\tau$. 
An advantage of using an iterative solver in a sequential run is that $\tau$ can be just a bit smaller than the desired threshold we want to reach on our domain, otherwise, it would unnecessarily prolong the runtime. 
However, this is not the case when choosing a relative tolerance $\widetilde{\tau}$ for the linear solver within our method for inner systems (see algorithm~\ref{alg:alphas}, figure~\ref{fig:advection64} and section~\ref{sec:sande}). 
On one hand, the method strongly benefits from having a small $\widetilde{\tau}$ in general since it plays a key role in the convergence, as mentioned in lemma~\ref{lemma:ultimate}, but on the other hand, the linear solver needs more iterations, thus execution time $T_\mathrm{sol, par}$ is higher, cf.~\ref{sec:sande}. 
As a result, this is indeed a drawback when using the parallel method and has to be kept in mind.

\begin{figure}[!htb]
\centering
\subfigure[Heat equation]{%
	\includegraphics[height=1.8in]{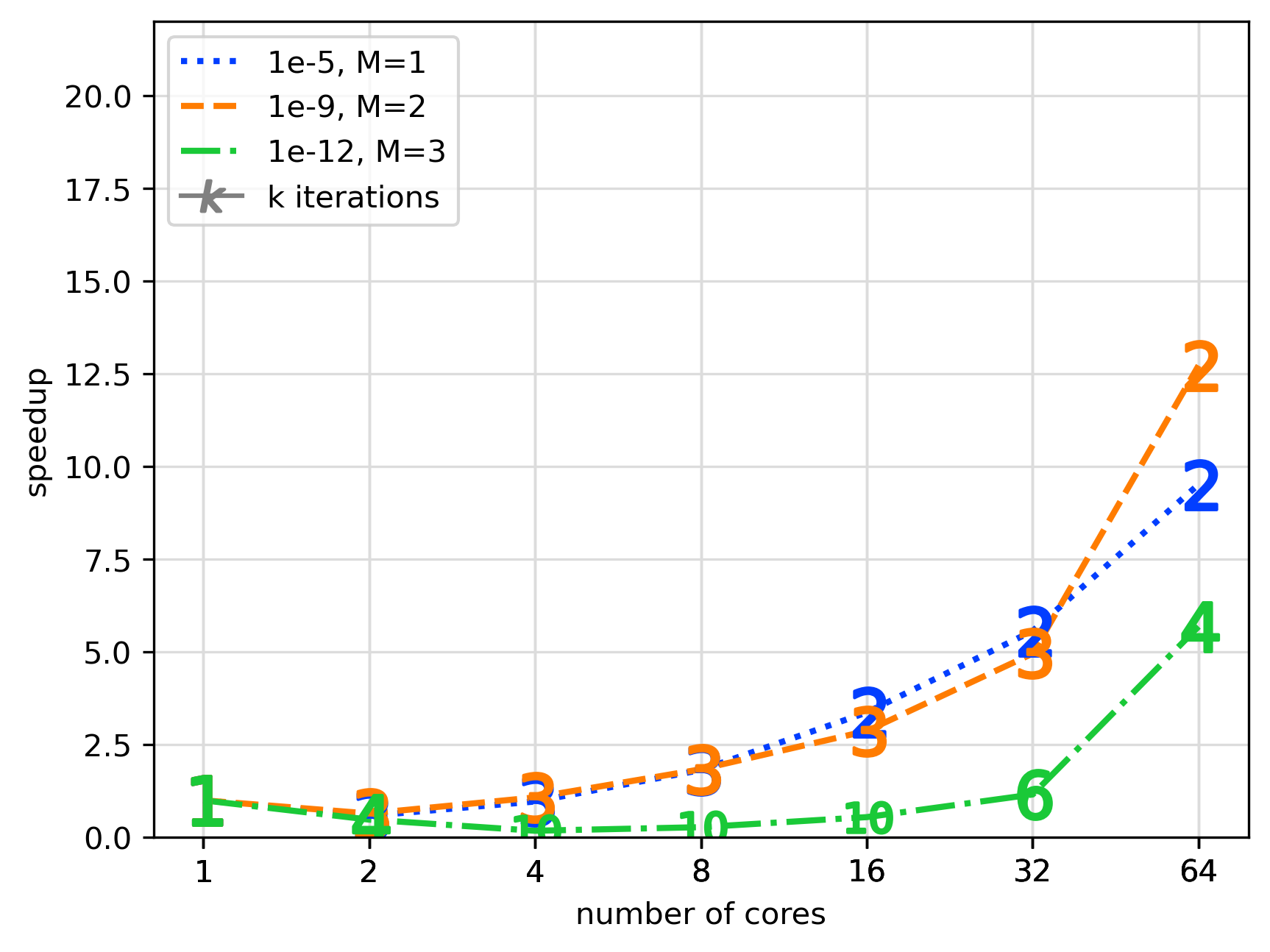}%
	\includegraphics[height=1.8in]{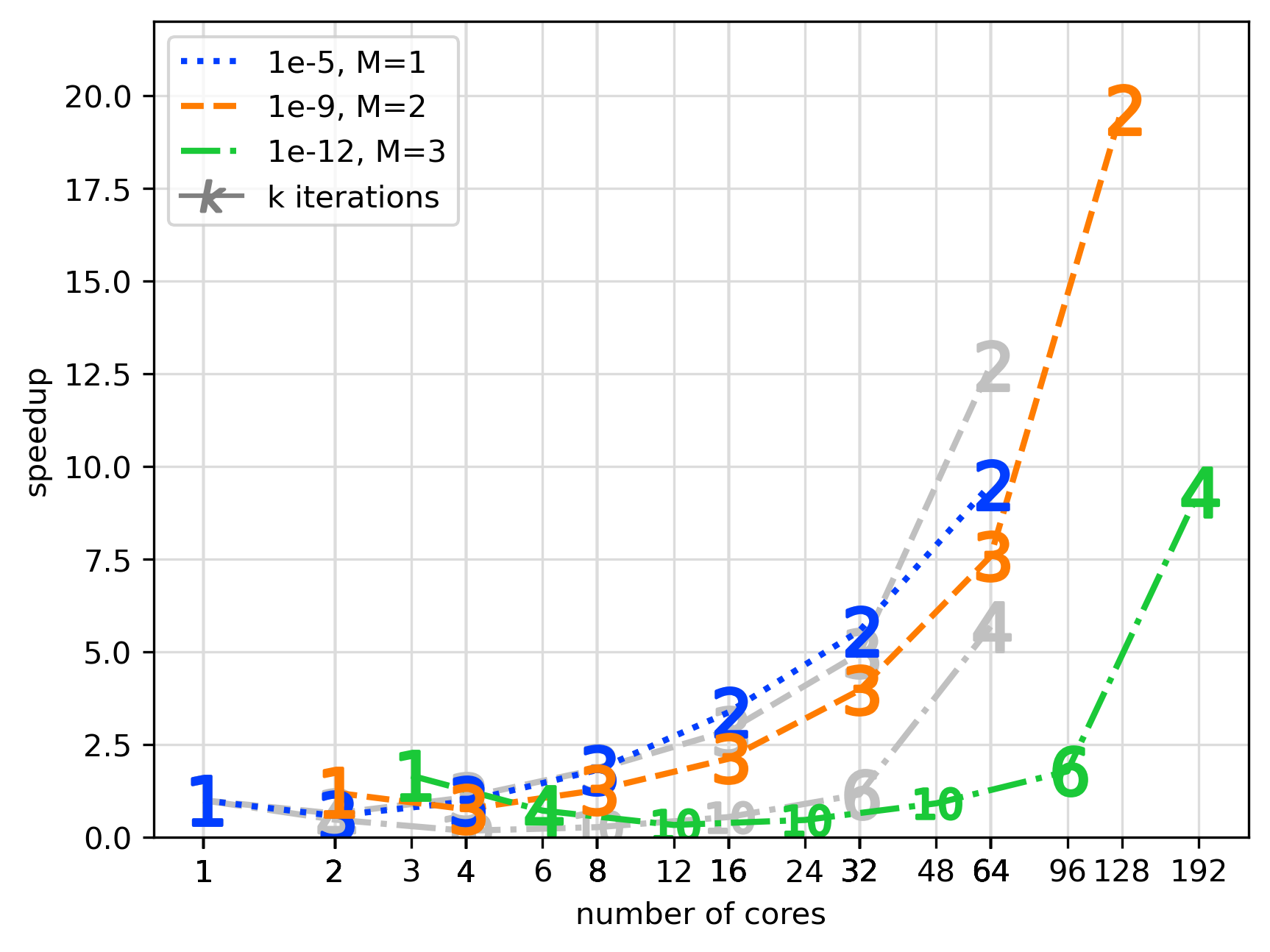}}%

\centering
\subfigure[Advection equation]{%
	\includegraphics[height=1.8in]{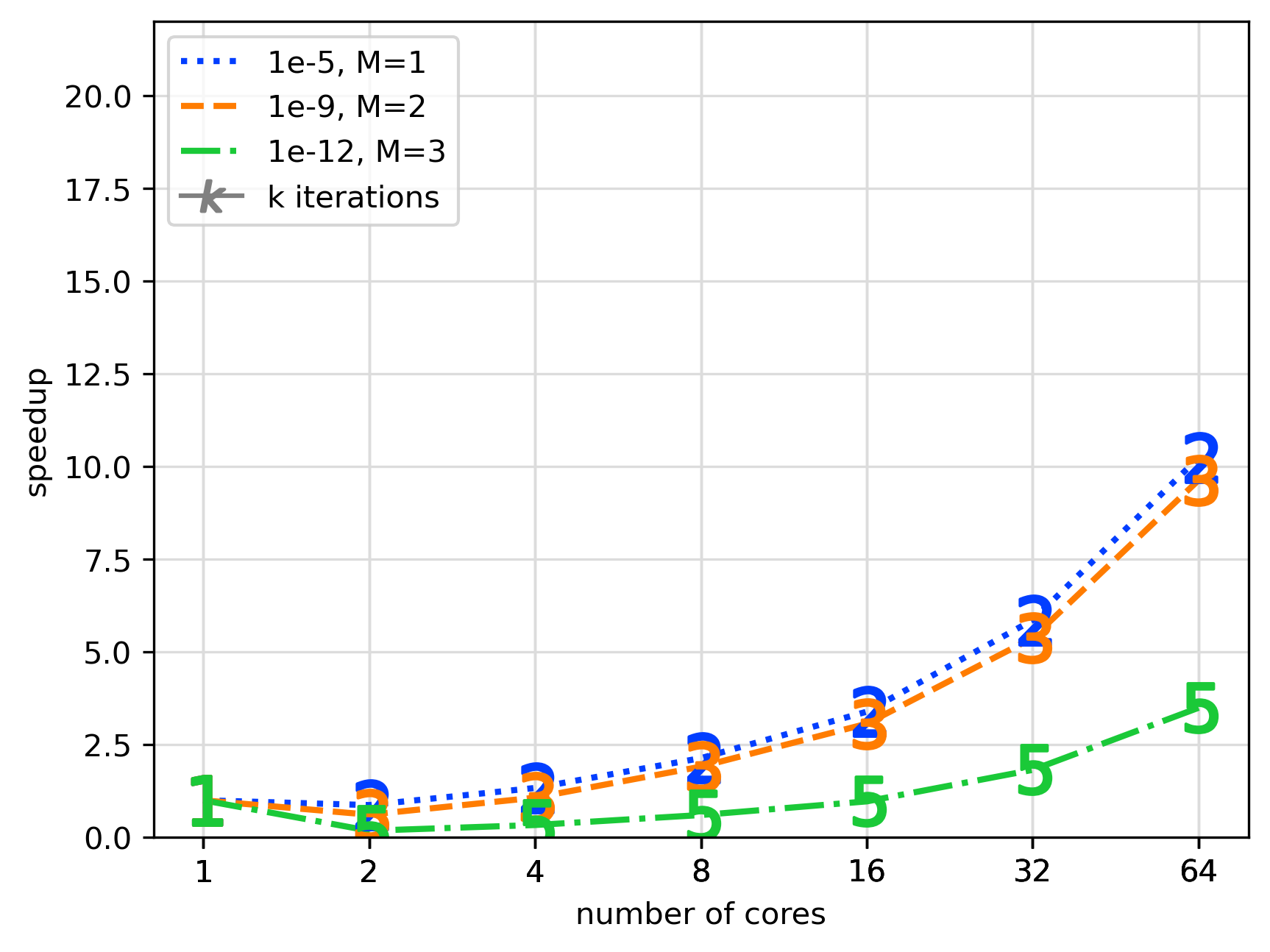}%
	\includegraphics[height=1.8in]{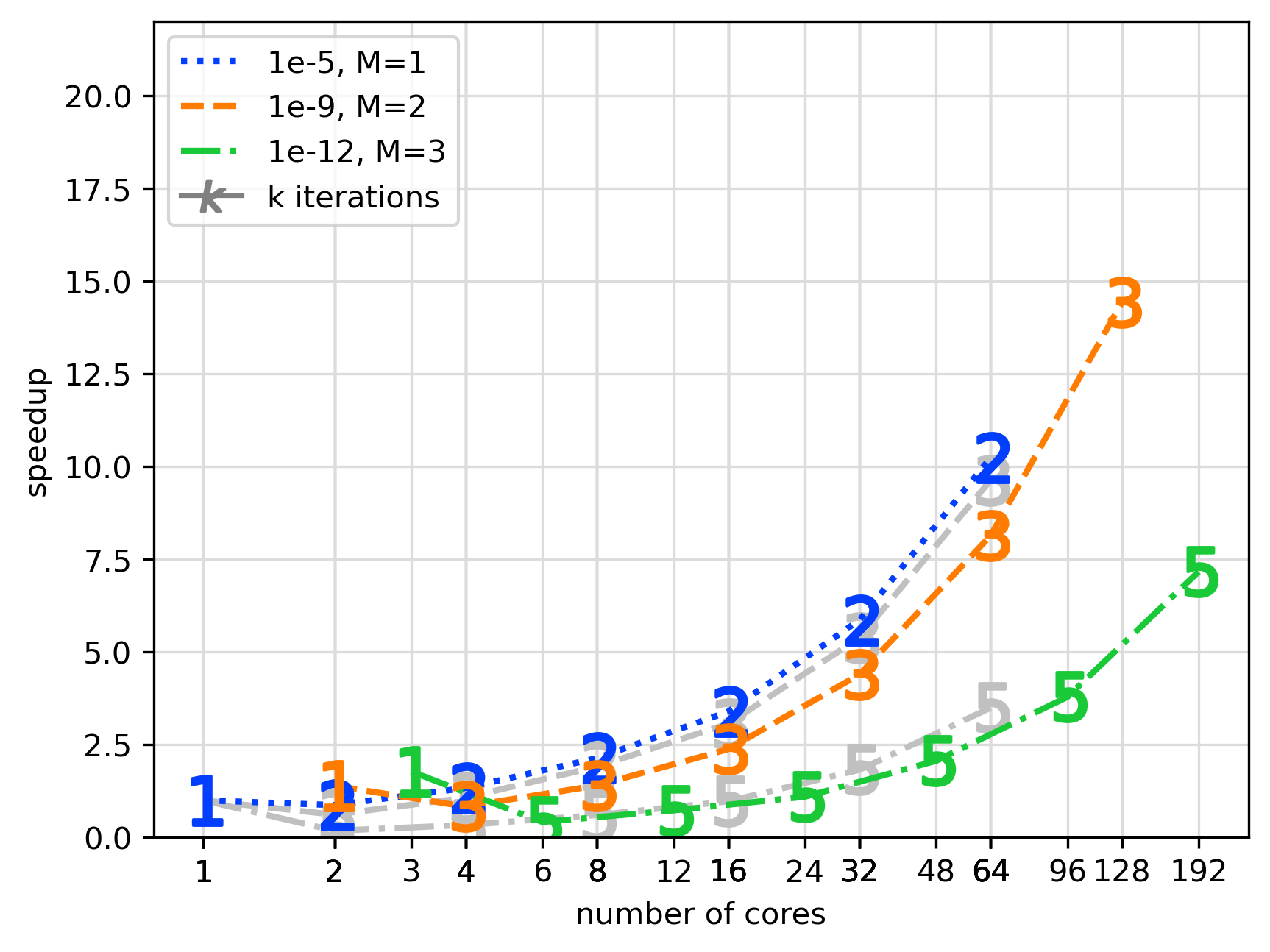}}%

\caption{Strong scaling plots for the three thresholds and two equations. The numbers on the curves represent the number of outer iterations Algorithm~\ref{alg:paralpha} needs in order to reach the given tolerance. 
The left graphs show a setup where $L = \nstep$ and $\ncoll=1$. 
This is a setting where the diagonalization of the preconditioner is handled in parallel and the inner systems are solved on only one core.
On the other hand, the graphs on the right are for $L = \nstep$ and $M=\ncoll$, showing that additional parallelism across the method improves speedup.
In both cases $\nspace = 1$, i.e.~there is no spatial parallelization. 
The gray plots are the same as in the left column, serving as reference information.\label{fig:strongscaling}}
\end{figure}

The strong scaling plots with parallelism across time-steps and across the collocation nodes are presented in figure~\ref{fig:strongscaling}. 
We can see that the method provides significant speedup for both heat and advection equations.
It also becomes clear that when very accurate results are needed (green curves), the performance is degrading.
Interestingly, this is in contrast to methods like PFASST or RIDC~\cite{RIDC}, where higher order in time gives a better parallel performance.
Comparing parallelization strategies, using both parallelizations across the collocation nodes and time-steps leads to better results and is always preferable.
The results do not differ much between the heat and the advection equation, showing only slightly worse results for the latter.
This supports the idea that single-level diagonalization as done here is a promising strategy also for hyperbolic problems.
Note, however, that for the heat equation we have multiple runs with up to $10$ iterations.
This is because the linear system shifts that are produced by the diagonalization procedure are poorly conditioned for the particular choice of the $\alpha_k$. 
Manually picking this sequence with slightly different, larger, values can circumvent this issue to a certain degree.
However, we did not do this here, because we wanted to show that the method does not always perform well when used without manually tweaking the parameters.
A more in-depth study of this phenomenon is needed and left for future work.
Note that this is not an issue for direct solvers.

Yet, parallel-in-time integration methods are ideally used in combination with a space-parallel algorithm, especially in the field of PDE solvers. 
Therefore, we test the method together with \texttt{petsc4py}'s parallel implementation of GMRES for the advection equation.
The results are shown in figure~\ref{fig:strongscaling2}.
We scaled \texttt{petsc4py} using up to $96$ cores.
This was done by solving sequentially in time with implicit Euler on $64$ time-steps.
The number of cores $\nspace=12$ is chosen as the last point where \texttt{petsc4py} scaled reasonably well for this problem size.
After fixing the spatial parallelization, the double time parallelization is layered on top of that.
The strong scaling across all quadrature nodes and time-steps is repeated for $L =1, 2,..., 64$ for three different thresholds $10^{-5}, \, 10^{-9}, \, 10^{-12}$. 
The plots show clearly that by using this method we can get significantly higher speedups for a fixed-size problem than when using a space-parallel solver only.
In the best case presented here, we obtain a speedup of about $85$ over the sequential run.
We would like to emphasize here that all runs are done with realistic parameters, not over-resolving in space, nor time, nor in the inner solves~\cite{12ways}.
Thus, while the space-parallel solver gives a speedup of up to about $8$, we can get a multiplicative factor of more than $10$ by using a space- and doubly time-parallel method.

\begin{figure}[!htb]
\centering
\includegraphics[width=0.8\textwidth]{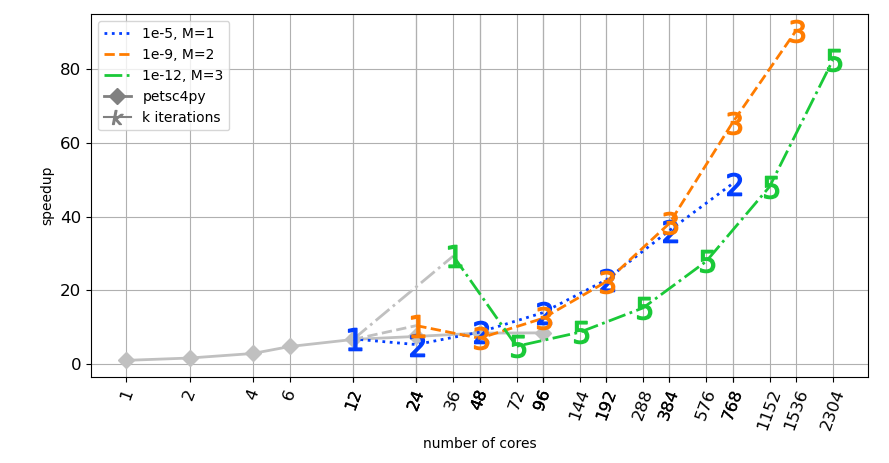}
\caption{Strong scaling plots for the advection equation and three thresholds.
The solid gray line represents the spatial scaling with \texttt{petsc4py} for the advection equation solved sequentially with implicit Euler on $64$ time-steps. 
The curve shows the scaling of \texttt{petsc4py} for our problem is best around 12 cores, since using more cores does not increase the speedup much. 
The colored lines represent the scaling where $\nstep=L$, $\ncoll=M$, and $\nspace=12$, in other words, parallelism across time-steps, across the method, and in space. 
The numbers on the curves represent the number of outer iterations Algorithm~\ref{alg:paralpha}} needs in order to reach the given tolerance. \label{fig:strongscaling2}
\end{figure}
\section{Conclusion}
In this paper, an analysis and an implementation of a diagonalization-based parallel-in-time integrator for linear problems is presented.
Using an $\alpha$-circulant, "all-at-once" preconditioner within a simple Richardson iteration, the diagonalization of this preconditioner leads to an appealing time-parallel method without the need to find a suitable coarsening strategy.
Convergence is very fast in many cases so that this does not even require an outer Krylov solver to get an efficient parallel-in-time method.
We extend this idea to high-order collocation problems and show a way to solve the local problems on each time-step efficiently again in parallel, making this method doubly parallel-in-time.
Based on the convergence theory and error bounds, we propose an effective and applicable strategy to adaptively select the crucial $\alpha$-parameter for each iteration.
However, depending on the size of the collocation problem, we show that some of these $\alpha$ lead to non-diagonalizable inner systems, which need to be avoided.
This theoretical part is augmented by a thorough description of the parallel implementation.
We estimate the expected speedup, provide verification of the adaptive choice of the parameters and, finally, show actual parallel runs on a high-performance computing system on up to $2304$ cores.
These parallel tests demonstrate that our proposed approach does indeed yield a significant decrease in time-to-solution, even far beyond the point of saturation of spatial parallelization.

By design, the diagonalization-based parallel-in-time integrators work only for linear problems with constant coefficients.
In order to solve more complex, more realistic problems, they have to be coupled to a nonlinear solver.
Here, they work as inner solvers for an inexact outer Newton iteration.
As such, the challenges and solutions presented here are still valid. 
In future work, we will extend this implementation and the results to nonlinear problems, using the existing theory from the literature and our own extensions as described in this paper.
Although not required, coupling the adaptive strategy with outer iterations of a Krylov solver is an interesting field of future research. 

\printbibliography

\appendix
\section{Proof of lemma~\ref{lemma:ultimate}}\label{app:proof}

With $\tC z = w$ it holds
\begin{align*}
\Delta w = \tC \Delta z + \Delta \tC (z + \Delta z).
\end{align*}
\alert{Using the triangle inequality, we have}
\begin{align}\label{eq:dw}
\|\Delta w\| \leq \|\tC\| \|\Delta z\| + \varepsilon\|\tC\| \|\Delta z + z\|.
\end{align}
Now we need to find a way to bound $\|\Delta z\|$ and $\|\Delta z + z\|$. 
Since the system solving is inexact, there exists a vector $\xi$ such that $(\tB + \Delta \tB) (z + \Delta z) = y + \Delta y + \xi$ \alert{with} $\|\xi\| \leq \tau \|y + \Delta y\|.$ \alert{Since} $(\tB + \Delta \tB)\Delta z + \Delta \tB z = \Delta y + \xi$,
\begin{align*}
\Delta z = (\tB + \Delta \tB)^{-1}(\Delta y + \xi - \Delta \tB z) 
\end{align*}
which yields the bound 
\begin{align}\label{eq:zz}
\|\Delta z\| &\leq \|(\tB + \Delta \tB)^{-1}\|\big(\|\Delta y\| + \|\xi\| + \varepsilon\|\tB\| \|z\|\big) \notag\\
		& \leq \|(\tB + \Delta \tB)^{-1}\|\big(\|\Delta y\| + \tau\|\Delta y + y\| + \varepsilon\kappa(\tB) \|y\|\big),
\end{align}
where the last inequality comes from the fact that $\|z\| \leq \|\tB^{-1}\|\|y\|$. 
Note that $\tB + \Delta \tB$ is invertible for small perturbations if $\tB$ is invertible. On the other hand, we have
\begin{align*}
z + \Delta z = (\tB + \Delta \tB)^{-1}(y + \Delta y + \xi)
\end{align*}
which gives
\begin{align} \label{eq:zdz}
\|z + \Delta z\| &\leq \|(\tB + \Delta \tB)^{-1}\| \big( \|y + \Delta y\| + \|\xi\|\big) \notag\\
		 & \leq (1 + \tau)\|(\tB + \Delta \tB)^{-1}\| \|y + \Delta y\|.
\end{align}
Combining~\eqref{eq:zz} and~\eqref{eq:zdz} with~\eqref{eq:dw} yields
\begin{align}\label{eq:dw2}
\|\Delta w\| \leq \|(\tB + \Delta \tB)^{-1}\| \|\tC\|\big( \|\Delta y\| + (\tau + \varepsilon + \varepsilon \tau)\|y + \Delta y\| + \varepsilon \kappa(\tB)\|y\| \big).
\end{align}
Since $y = \tA x$ and $\|\Delta \tA\| \leq \varepsilon \|\tA\|$, we get the inequalities
\begin{align*}
\|y\| &\leq \|\tA\|\|x\|, \\
\|\Delta y\| &= \|\Delta \tA x\| \leq \varepsilon \|\tA\|\|x\|, \\
\|y + \Delta y\| &\leq (1+\varepsilon)\|\tA\|\|x\|.
\end{align*}
Including these inequalities in~\eqref{eq:dw2} yields
\begin{align*} 
\|\Delta w\| &\leq \|(\tB + \Delta \tB)^{-1}\| \|\tC\|\|\tA\|\big( \varepsilon + (\tau + \varepsilon + \varepsilon \tau)(1+\varepsilon) + \varepsilon \kappa(\tB)\big)\|x\| \\
& \leq \|(\tB + \Delta \tB)^{-1}\| \|\tC\|\|\tA\|\big( 2\varepsilon + \tau + \varepsilon \kappa(\tB)\big)\|x\| + O(\varepsilon \tau + \varepsilon^2).
\end{align*}
It remains to bound $\|(\tB + \Delta \tB)^{-1}\|$. 
From $(\tB + \Delta \tB)^{-1} = \tB^{-1}(\vect{I} + \tB^{-1} \Delta \tB)^{-1}$ we have
\begin{align*}
    \|(\tB + \Delta \tB)^{-1}\| \leq \|\tB^{-1}\|\|(\vect{I} + \tB^{-1} \Delta \tB)^{-1}\| \leq \frac{\|\tB^{-1}\|}{1 - \|\tB^{-1} \Delta \tB\|}.
\end{align*}
Since the function $1/(1-x)$ is monotonically increasing, we get
\begin{align*}
    \|(\tB + \Delta \tB)^{-1}\| \leq \frac{\|\tB^{-1}\|}{1 - \varepsilon \kappa(\tB)}
\end{align*}
which completes the proof.
\end{document}